\numberwithin{equation}{section}
\newtheorem{thm}{Theorem}
\newtheorem{lem}[thm]{Lemma}
\newtheorem{prop}[thm]{Proposition}
\newtheorem{defn}[thm]{Definition}
\theoremstyle{definition}
\newtheorem{rem}{Remark}
\newcommand{\rr}{\mathbb{R}}
\newcommand{\ee}{\varepsilon}
\newcommand{\dl}{\delta} 
\newcommand{\ph}{\varphi}
\newcommand{\meg}{\geqslant}
\newcommand{\mik}{\leqslant}
\title[On The Novikov-Veselov Equation]{Well-Posedness And Ill-Posedness Results For The Novikov-Veselov Equation}
\author{Yannis Angelopoulos}
\begin{document}
\address{Department of Mathematics, University of Toronto, Toronto, On, Canada}
\email{yannis@math.toronto.edu}
\begin{abstract}
 In this paper we study the Novikov-Veselov equation and the related modified Novikov-Veselov equation in certain Sobolev spaces. We prove local well-posedness in $H^s (\rr^2)$ for $s > \frac{1}{2}$ for the Novikov-Veselov equation, and local well-posedness in $H^s (\rr^2)$ for $s > 1$ for the modified Novikov-Veselov equation. Finally we point out some ill-posedness issues for the Novikov-Veselov equation in the supercritical regime.
\end{abstract}
\maketitle
\tableofcontents
\section{Introduction}
The Novikov-Veselov equation was introduced by Novikov and Veselov in \cite{NV1}, \cite{NV2}. It has the following form:

\begin{equation}\label{1}
 \left\{\begin{aligned}
        \partial_t u + \partial^3 u +\bar{\partial}^3 u + NL_1 (u) + NL_2 (u) = 0 \\
        u(0,x,y) = \phi(x,y)\
        \end{aligned}
 \right.
\end{equation}
and as it is indicated by the arguments in the functions above, it is an equation posed on $\rr^2$. Moreover by convention, both the initial data $\phi : \rr^2 \rightarrow \rr$ and the solution $u : \rr \times \rr^2 \rightarrow \rr$ are taken to be real-valued. The operators $\partial$ and $\bar{\partial}$ are given by the formulas $\partial = \frac{1}{2} (\partial_x - i \partial_y )$, $\bar{\partial} = \frac{1}{2} (\partial_x + i \partial_y )$, and the nonlinear part is given by:
$$ NL_1 (u) = \frac{3}{4} \partial (u \bar{\partial}^{-1} \partial u), \quad \quad NL_2 (u) = \frac{3}{4} \bar{\partial} (u \partial^{-1} \bar{\partial} u) $$
This equation has the remarkable property of being completely integrable. The form of its linear and its nonlinear parts suggest similarities with Korteweg-de-Vries type equations. To our knowledge, it hasn't been proven so far that \eqref{1} possesses solutions for data in any ``reasonable'' space (where by ``reasonable'' here we mean a standard Sobolev space), although many results have been obtained in other directions (see \cite{P} and the references therein).

There is plenty of literature around this equation through different methods (the inverse scattering method for instance), and in different formulations (at non zero energy, in our case the energy is 0). The interested reader can look at \cite{K}, \cite{KN1}, \cite{KN2}, \cite{KN3}, \cite{G}, \cite{GM}, \cite{LMS}, \cite{LMSS}, \cite{N} for different results on a variety of problems concerning \eqref{1}.

For the Novikov-Veselov equation \eqref{1} we use the Fourier restriction norm method of Bourgain (see \cite{B}, and also \cite{B1} for a result on an equation in dimension 2) in order to prove local well-posedness for initial data in $H^s (\rr^2)$ where $s> \frac{1}{2}$. The way that this method is implemented here follows closely the lines of the preprint of Molinet and Pilod \cite{MP}, where a similar result is proved for the Zakharov-Kuznetsov equation whose linear part resembles the one of \eqref{1}. It is based on a bilinear estimate for high-low frequency interactions (the analogue of (3.16) of \cite{MP}) and a Strichartz-type estimate given by a harmonic analysis result of Carbery, Kenig and Ziesler \cite{CKZ}. 

A closely related equation to \eqref{1} is the modified Novikov-Veselov equation (we use here the formulation given by Perry in \cite{P}, see also \cite{Bo}):
\begin{equation}\label{mnv}
 \left\{\begin{aligned}
        \partial_t u + \partial^3 u +\bar{\partial}^3 u + mNL_1 (u) + mNL_2 (u) + mNL_3 (u) + mNL_4 (u) = 0 \\
        u(0,x,y) = \phi(x,y)\
        \end{aligned}
 \right.
\end{equation}
where 
$$ mNL_1 (u) = \frac{3}{4} \partial u \bar{\partial}^{-1} \partial (|u|^2 ), \quad mNL_2 (u) = \frac{3}{4} \bar{\partial} u \partial^{-1} \bar{\partial} (|u|^2 ) $$ $$ mNL_3 (u) = \frac{3}{4} u \bar{\partial}^{-1} [\partial (\bar{u} \partial u )], \quad mNL_4 (u) = \frac{3}{4} u \partial^{-1} [\bar{\partial} (\bar{u} \bar{\partial} u )] $$

For \eqref{mnv} we use the same techniques to obtain new well-posedness results. Instead of a bilinear estimate, we use two trilinear ones for different frequency interactions and we rely again on Strichartz estimates in order to prove the crucial trilinear $X^{s,b}$ estimate. 

It should be noted that for the modified Novikov-Veselov equation, the existence of global solutions was proven by Perry in \cite{P} for data in the space
$$ H^{2,1} (\rr^2) \cap L^1 (\rr^2) \mbox{ where } $$ $$ H^{m,n} (\rr^2) = \{ u \in L^2 (\rr^2) | (I-\Delta)^{m/2} u, (1 + |\cdot|)^n u(\cdot) \in L^2 (\rr^2) \} $$

Finally we prove that for data in $\dot{H}^{s} (\rr^2)$ where $s < -1$ it is impossible to prove the existence of solutions using the fixed-point method, no matter which subspace of $\dot{H}^{s} (\rr^2)$ you choose. Ill-posedness results of this type were first demonstrated by Bourgain (see \cite{B2} for example). Here we follow the lines of the article of Molinet, Saut and Tzvetkov \cite{MST1} (see also the nice and short expositions \cite{MST2} and \cite{Tz}, the first one by the same team of authors for the Benjamin-Ono equation, and the second one by Tzvetkov for the KdV equation -- which is in some sense more similar to ours), where a much stronger result was proven for the Kadomtsev-Petviashvili I equation.

\paragraph{\textbf{Overview of the article.}} In the following section (Section 2) we introduce some notational conventions that will be used throughout the article. 

In Section 3, we record the dispersive estimates and the Strichartz estimates that follow from them. We also state the important $H^{-1/4}_{x,y} \rightarrow L^4_{t,x,y}$ Strichartz-type inequality for the linear propagator that was mentioned before, which is derived from a result of Carbery, Kenig and Ziesler.

In Section 4, we prove the bilinear and trilinear estimates (following the related section of the paper of Molinet and Pilod \cite{MP}) that will be used in the proofs of the main well-posedness results of the article. These results will be proven in sections 5 and 6. 

To be more precise, in Section 5 we show the following well-posedness theorem for \eqref{1} where the fixed-point arguments takes place in the Bourgain-type $X^{s,b}$ spaces for the Novikov-Veselov equation.
\begin{thm}\label{wpnv}
 The Novikov-Veselov equation \eqref{1} is locally well-posed in $H^s (\rr^2)$ for any $s > \frac{1}{2}$.
\end{thm}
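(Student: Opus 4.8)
The plan is to run Bourgain's Fourier restriction method: recast \eqref{1} through Duhamel's formula, set up a contraction in a Bourgain space $X^{s,b}$ adapted to the dispersion relation of \eqref{1}, and reduce everything to a single bilinear estimate. First I would identify the phase function of the linear propagator. Writing $(\xi,\eta)$ for the Fourier variable dual to $(x,y)$ and $\zeta=\xi+i\eta$, the symbol of $\partial$ is a constant multiple of $\bar\zeta$ and that of $\bar\partial$ a constant multiple of $\zeta$, so $\partial^3+\bar\partial^3$ has purely imaginary symbol equal (up to sign) to $i\,\omega(\xi,\eta)$ with $\omega(\xi,\eta)=\tfrac14(\xi^3-3\xi\eta^2)=\tfrac14\,\mathrm{Re}\,(\zeta^3)$, a homogeneous cubic of Korteweg-de-Vries / Zakharov-Kuznetsov type. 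I then define $X^{s,b}$ by the norm $\|u\|_{X^{s,b}}=\big\|\langle(\xi,\eta)\rangle^{s}\langle\tau-\omega(\xi,\eta)\rangle^{b}\,\widehat{u}\big\|_{L^2_{\tau,\xi,\eta}}$ together with its time-restricted version $X^{s,b}_T$, and let $\psi$ denote a smooth cutoff in time localizing to $|t|\lesssim 1$.

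Next I would record the standard linear estimates in these spaces, which follow formally from the definition and the dispersive setup: the homogeneous estimate $\|\psi(t)\,e^{-t(\partial^3+\bar\partial^3)}\phi\|_{X^{s,b}}\lesssim\|\phi\|_{H^s}$, the inhomogeneous (Duhamel) estimate $\big\|\psi(t)\int_0^t e^{-(t-t')(\partial^3+\bar\partial^3)}F(t')\,dt'\big\|_{X^{s,b}}\lesssim\|F\|_{X^{s,b-1}}$ for $b>\tfrac12$, and the time-localization estimate trading a factor $T^{\hh}$ for some $\hh>0$ when lowering the modulation exponent. With these in hand, local well-posedness for $s>\tfrac12$ follows from the contraction mapping principle provided I can establish the bilinear estimate
$$\|NL_1(u)+NL_2(u)\|_{X^{s,b-1}}\lesssim\|u\|_{X^{s,b}}^2$$
for some $b$ slightly larger than $\tfrac12$ (in its polarized two-function form); uniqueness and continuous dependence on the data then come out of the same estimate in the usual way.

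The heart of the matter is therefore this bilinear estimate, which I would prove by a dyadic Littlewood-Paley decomposition in space-time frequencies, following \cite{MP}. The structural observation is that $\bar\partial^{-1}\partial$ (and $\partial^{-1}\bar\partial$) is a zeroth-order multiplier of modulus one, so each $NL_j$ is morally $\partial(u\cdot v)$ and the estimate must recover exactly one derivative. That derivative is paid for by the resonance function: with $\zeta=\zeta_1+\zeta_2$ one has the factorization
$$\omega(\zeta)-\omega(\zeta_1)-\omega(\zeta_2)=\tfrac34\,\mathrm{Re}\,(\zeta_1\zeta_2\zeta),$$
so whenever the three frequencies are comparable and no single modulation variable $\tau-\omega$ dominates, the largest modulation is $\gtrsim|\zeta_1||\zeta_2||\zeta|$, which more than compensates the derivative loss. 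I would split into low-high, high-low and high-high interactions and dispatch the regions of large resonance directly with this algebraic gain.

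The main obstacle is the complementary non-resonant regime, and in particular the high-low interaction together with the angular configurations where the trilinear form $\mathrm{Re}(\zeta_1\zeta_2\zeta)$ nearly vanishes despite the magnitudes being large (the frequencies nearly aligned along the zero lines of the cubic); here the algebraic gain is insufficient. In these regions I would instead invoke the $H^{-1/4}_{x,y}\to L^4_{t,x,y}$ Strichartz-type inequality coming from the Carbery-Kenig-Ziesler estimate \cite{CKZ}, combined with H\"older in the $L^4$-based spaces, to absorb the extra derivative. The threshold $s>\tfrac12$ emerges precisely from summing the dyadic pieces of the high-low interaction (the analogue of (3.16) of \cite{MP}): the low-frequency output must be controlled by a series convergent in the Sobolev exponent, and balancing this summation against the derivative recovered from the $L^4$ estimate is what pins down $s>\tfrac12$. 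Carrying out this bookkeeping carefully is the crux of the argument.
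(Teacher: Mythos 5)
Your overall architecture --- Duhamel's formula plus contraction in $X^{s,b}$ with $b=\tfrac12+\ee$, the standard linear estimates, the reduction to a bilinear estimate gaining one power of the modulation weight, and the factorization $w-w_1-w_2=\tfrac34\,\mathrm{Re}(\zeta_1\zeta_2\zeta)$ (which is exactly the paper's resonance function $R$ written compactly; the identity checks out) --- coincides with the paper's, as does the use of the Carbery--Kenig--Ziesler $L^4$ estimate in the comparable-frequency cases, where two quarter-derivative gains give the condition $s+1-2s-\tfrac12<0$, i.e.\ $s>\tfrac12$. The genuine gap is in the high-low interaction with high-frequency output (Case 4 in the proof of Proposition \ref{propbl}), which you propose to close with H\"older and the $L^4$ estimate. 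That cannot work, for any $s$: with $k_h\approx k_f\gg k_g$ the kernel contributes $2^{k_f}2^{-sk_g}$ (one full high derivative from the nonlinearity), while two applications of \eqref{x4} recover only $2^{-k_f/4}2^{-k_g/4}$, leaving $2^{3k_f/4}2^{-(s+1/4)k_g}$, which diverges in $k_f$ no matter how large $s$ is (pairing the two high-frequency factors in $L^4$ instead still leaves $2^{k_f/2}$). Restricting to the near-null angular configurations does not rescue this: there $R$ is small, so no modulation gain is available, and H\"older against a linear Strichartz bound is insensitive to the thinness of that set.

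The missing ingredient is the dedicated bilinear $L^2$ smoothing estimate, Lemma \ref{lem1} --- which is the actual analogue of (3.16) of \cite{MP}; in your sketch you cite (3.16) only as summation bookkeeping and attribute the derivative recovery to $L^4$, which misplaces the mechanism. The lemma gives
\[
\| P_{k_f} Q_{l_f} f \, P_{k_g} Q_{l_g} g \|_{L^2_{t,x,y}} \lesssim \frac{2^{k_g/2}}{2^{k_f}}\, 2^{l_f/2} 2^{l_g/2} \, \| P_{k_f} Q_{l_f} f \|_{L^2_{t,x,y}} \| P_{k_g} Q_{l_g} g \|_{L^2_{t,x,y}},
\]
a gain of a \emph{full} high derivative, and it is proved not by Strichartz but by Cauchy--Schwarz on the convolution together with measure estimates of level sets of $R$ (Theorem \ref{ap1} and Lemma \ref{ap2}), using the gradient non-degeneracy $|\partial_{\xi_1}R|\gtrsim 2^{2k_f}$ when $|\xi_1|\not\approx|\mu_1|$ and --- this is the NV-specific point that answers precisely the degenerate configurations you flag, where the ZK-style argument breaks --- $|\partial_{\mu_1}R|\gtrsim 2^{2k_f}$ when $|\xi_1|\approx|\mu_1|$. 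In other words, even where $R$ itself nearly vanishes its gradient does not, so the near-resonant level sets are thin in one frequency variable; that thinness, not largeness of the modulations and not $L^4$, pays for the derivative. Feeding the lemma into the dyadic sum produces $2^{-(s-1/2)k_g}$ and hence the threshold $s>\tfrac12$ in this case as well. Unless you supply this lemma (or an equivalent quantitative use of the smallness of the near-resonant sets), your contraction scheme does not close.
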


And in Section 6 we will prove the analogue for \eqref{mnv}, but with a different regularity threshold.
\begin{thm}\label{wpmnv}
 The modified Novikov-Veselov equation \eqref{mnv} is locally well-posed in $H^s (\rr^2)$ for any $s > 1$.
\end{thm}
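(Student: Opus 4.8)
The plan is to run Bourgain's Fourier restriction norm method exactly as in the proof of Theorem~\ref{wpnv}, replacing the bilinear estimate by the two trilinear estimates of Section~4. First I would record the symbol of the linear part. Writing $\partial=\frac12(\partial_x-i\partial_y)$ and $\bar\partial=\frac12(\partial_x+i\partial_y)$, a direct computation gives that $\partial^3+\bar\partial^3$ has Fourier symbol $\frac{i}{4}\xi(3\eta^2-\xi^2)$, so the free propagator is $e^{-t(\partial^3+\bar\partial^3)}\phi=\mathcal{F}^{-1}\big(e^{it\omega(\xi,\eta)}\widehat\phi\big)$ with the cubic phase
$$\omega(\xi,\eta)=\tfrac14\,\xi(\xi^2-3\eta^2).$$
I then work in the associated spaces $X^{s,b}$ with norm $\|u\|_{X^{s,b}}=\big\|\langle(\xi,\eta)\rangle^{s}\langle\tau-\omega(\xi,\eta)\rangle^{b}\,\widehat u(\tau,\xi,\eta)\big\|_{L^2_{\tau,\xi,\eta}}$, fixing $b$ slightly above $1/2$ and $s>1$.

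Next I would reformulate \eqref{mnv} by Duhamel's formula. With a smooth time cutoff $\psi$ and $\psi_T(t)=\psi(t/T)$, and writing $N(u)=mNL_1(u)+mNL_2(u)+mNL_3(u)+mNL_4(u)$, I consider the map
$$\Phi(u)(t)=\psi(t)\,e^{-t(\partial^3+\bar\partial^3)}\phi-\psi_T(t)\int_0^t e^{-(t-t')(\partial^3+\bar\partial^3)}N(u)(t')\,dt'.$$
The standard linear estimates in these spaces --- the homogeneous bound $\|\psi\,e^{-t(\partial^3+\bar\partial^3)}\phi\|_{X^{s,b}}\lesssim\|\phi\|_{H^s}$ and the Duhamel bound $\big\|\psi_T\int_0^t e^{-(t-t')(\cdots)}F(t')\,dt'\big\|_{X^{s,b}}\lesssim T^{\theta}\|F\|_{X^{s,b-1}}$ for some $\theta>0$ (valid since $b>1/2$) --- reduce local well-posedness to the trilinear estimate
$$\|N(u)\|_{X^{s,b-1}}\lesssim\|u\|_{X^{s,b}}^{3},$$
together with its polarized version in three independent arguments, so that $\Phi$ is a contraction on a ball of $X^{s,b}$ restricted to $[-T,T]$ once $T$ is small. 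Existence, uniqueness and Lipschitz dependence on $\phi$ then follow from the fixed-point theorem and the corresponding difference estimate.

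The heart of the matter is therefore the trilinear estimate, which I would establish through the two frequency-localized trilinear lemmas of Section~4. Each $mNL_j$ is, after cancelling the zeroth-order multipliers $\bar\partial^{-1}\partial$ or $\partial^{-1}\bar\partial$, a cubic expression carrying exactly one net spatial derivative, and this single derivative is what forces the regularity threshold $s>1$. I would dualize, testing against $w$ with $\|w\|_{X^{-s,1-b}}\le1$, and decompose dyadically both in the spatial frequencies $\langle(\xi_i,\eta_i)\rangle\sim N_i$ and in the modulations $\langle\tau_i-\omega_i\rangle\sim L_i$, with the appropriate sign flips on the conjugated factors. In the regime where all frequencies are comparable, the $H^{-1/4}_{x,y}\to L^4_{t,x,y}$ Strichartz inequality of Section~3 --- in its $X^{s,b}$ form, $\big\||D_{x,y}|^{1/4}u\big\|_{L^4_{t,x,y}}\lesssim\|u\|_{X^{0,b}}$ --- applied to several factors and followed by H\"older in $L^4_{t,x,y}$, produces enough smoothing to absorb the derivative and to sum the dyadic blocks; this is the less delicate of the two trilinear estimates.

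The main obstacle is the high--low interaction, where the output and one input sit at a large frequency $N$ while the remaining inputs are at low frequency: here the derivative lands on a high mode and the low factors cannot help to recover it, so neither the resonance function nor the $L^4$ Strichartz estimate alone suffices. The remedy is to exploit the algebraic resonance identity $\omega(\xi,\eta)-\sum_{\pm}\omega(\xi_i,\eta_i)$ associated with the cubic phase, which in this configuration provides a lower bound on the largest modulation $L_i$ comparable to a power of $N$; the corresponding $\langle\tau_i-\omega_i\rangle^{b}$ weight then supplies part of the missing derivative, and combining this modulation gain with a single application of the Strichartz estimate on the low factors --- balanced against the derivative loss and the dyadic summation --- is what pins the threshold precisely at $s>1$. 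This is the second, and harder, trilinear estimate of Section~4. With both trilinear estimates in hand, the contraction argument of the second paragraph yields local well-posedness of \eqref{mnv} in $H^s(\rr^2)$ for every $s>1$.
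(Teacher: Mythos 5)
Your overall skeleton --- Duhamel's formula in $X^{s,b}$ with $b=1/2+\ee$, the linear estimates of Theorem \ref{xsbthm}, reduction to the trilinear bound $\|N(u)\|_{X^{s,-1/2+2\ee}}\lesssim\|u\|_{X^{s,1/2+\ee}}^3$, dualization and dyadic decomposition in frequency and modulation --- is exactly the paper's (Proposition \ref{proptrl} together with the contraction argument of Section 6.2), and you correctly identify the high--low interaction as the case that pins the threshold $s>1$. But the mechanism you propose for that crucial case contains a genuine gap. You claim that the resonance identity ``provides a lower bound on the largest modulation $L_i$ comparable to a power of $N$.'' For the Novikov--Veselov phase this is false in the high--low regime: taking the high frequency $(\xi_1,\mu_1)=(N,aN)$ and the low frequency $(\xi',\mu')=(\xi-\xi_1,\mu-\mu_1)$ of size $O(1)$, the resonance function of Section 4 satisfies
$$ R \;=\; \tfrac34\,N^2\bigl[(1-a^2)\,\xi' - 2a\,\mu'\bigr] \;+\; O(N), $$
which vanishes along a full line of low frequencies $(\xi',\mu')$ for every $a$. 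Since $w-w_1-w_2=R$, all modulations can simultaneously be $O(1)$ there, so no uniform gain $L_{\max}\gtrsim N^{\theta}$ is available, and the KdV-style argument you sketch collapses precisely in the case you call the heart of the matter. What the paper actually uses (Lemma \ref{lem1}, feeding into part 2 of Lemma \ref{lem2}) is a bilinear Strichartz \emph{refinement} proved by measure estimates on the sets where the modulations are constrained: the input is not a lower bound on $R$ but on its derivatives, $|\partial_{\xi_1}R|\gtrsim 2^{2k_f}$ when $|\xi_1|\not\approx|\mu_1|$, and $|\partial_{\mu_1}R|\gtrsim 2^{2k_f}$ when $|\xi_1|\approx|\mu_1|$ (a case split you would also need, since $\partial_{\xi_1}R$ degenerates on the diagonal), combined with Theorem \ref{ap1} and Lemma \ref{ap2}. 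The resulting factor $2^{k_g/2}/2^{k_f}$ is what produces the condition $s+1-2s<0$, i.e.\ $s>1$, in Case 2 of Proposition \ref{proptrl}.

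A smaller issue: in the comparable-frequency case you propose H\"older with the factors in $L^4_{t,x,y}$ via the transferred Carbery--Kenig--Ziesler bound. The dual function $h$ carries the weight $\langle w\rangle^{1/2-2\ee}$, whose exponent lies below $1/2$, so the $X^{0,b}\to L^4$ transference does not apply to it, and with $h$ removed by Cauchy--Schwarz the remaining three factors cannot all be placed in $L^4$ (the H\"older exponents $\tfrac12+\tfrac14+\tfrac14+\tfrac14>1$ do not close). The paper instead puts $h$ in $L^2$ and the other three factors in $L^6_{t,x,y}$, accepting a loss of $2^{k/6}$ per factor, which still yields the harmless threshold $s>3/4$ in this case. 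That part of your sketch is fixable; the high--low step, as you described it, is not, and must be replaced by the measure-theoretic multilinear estimates of Section 4.
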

In both theorems, by "locally well-posed in $H^s (\rr^2)$" we mean that there is some time $\dl = \dl (\| \phi \|_{H^s (\rr^2)} )$ such that there exists a unique solution to either \eqref{1} or \eqref{mnv} satisfying $u(0,x,y) = \phi(x,y)$ and
$$ u \in C([0,\dl]; H^s (\rr^2)) \cap X^{s,b}_{\dl} \mbox{ for some $b > 1/2$} $$
Moreover since we show this by a fixed-point argument, the data-to-solution map is smooth in a neighborhood of the initial data in $H^s (\rr^2)$, in any time interval of the form $[0,\dl']$ for $\dl' \in (0,\dl)$.

It should be noted that from the point of view of scaling (and hence, of criticality) there is plenty of room for improvement in both cases.

Finally in Section 7, we prove a result that is more or less expected, that a fixed-point argument can't give us a solution in the supercritical regime for \eqref{1}, following the exposition of Molinet, Saut and Tzvetkov in \cite{MST1}. The actual formulation of our result is the following one:
\begin{thm}\label{ipnv}
 Fix any $s,T \in \mathbb{R}$, $s<-1$, $T > 0$. Then there is no continuously embedded subspace $X_T$ of
 $C([0,T]; \dot{H}^s (\mathbb{R}^2 ))$ where the following inequalities hold:
 $$ \| e^{itNV} \phi \|_{X_T} \lesssim \| \phi \|_{H^s (\mathbb{R}^2 )} \quad \forall \phi \in H^s (\mathbb{R}^2 ), t\in [0,T] $$
 $$ \left\| \int_0^t e^{i(t-s)NV} [NL_1 (u)(s) + NL_2 (u)(s)] ds \right\|_{X_T} \lesssim \| u \|_{X_T}^2 \quad \forall t\in[0,T] $$
\end{thm}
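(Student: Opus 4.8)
The plan is to prove ill-posedness by contradiction: assume such a space $X_T$ exists, then construct a sequence of initial data whose images under the claimed bilinear estimate blow up, violating the inequality. Let me think about how to build the data and run the counterexample computation.

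The standard Molinet-Saut-Tzvetkov strategy (which the author explicitly cites) is: suppose both inequalities hold. Apply the second inequality with $u = e^{itNV}\phi$, which by the first inequality satisfies $\|u\|_{X_T} \lesssim \|\phi\|_{H^s}$. Chaining gives a bound on the Duhamel bilinear term in $X_T$, hence (by continuous embedding into $C([0,T];\dot H^s)$) a bound in $\dot H^s$. Then I'd choose a clever one-parameter family $\phi_N$, compute the bilinear Duhamel term's $\dot H^s$ norm explicitly via the Fourier transform, and show the ratio of left side to $\|\phi_N\|_{H^s}^2$ diverges as $N\to\infty$. The symbol of the linear operator $NV$ — the dispersion relation coming from $\partial^3+\bar\partial^3$, which in Fourier variables $(\xi,\eta)$ is (up to constants) $\xi^3 - 3\xi\eta^2$ or its conjugate structure — is the key to the resonance analysis.

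Let me sketch the concrete construction. I'd take $\phi_N = \hat\phi_N$ supported on two small boxes in frequency space, centered at frequencies $\mathbf{k}_1$ and $\mathbf{k}_2$ chosen so that the phase resonance is nearly exact: the output frequency $\mathbf{k}_1 + \mathbf{k}_2$ has dispersion symbol close to the sum of the two input symbols. The time integral in Duhamel's formula, $\int_0^t e^{i(t-\tau)\,p(\mathbf{k}_1+\mathbf{k}_2)}\, e^{i\tau p(\mathbf{k}_1)} e^{i\tau p(\mathbf{k}_2)}\, d\tau$ where $p$ is the dispersion symbol, produces a factor $\frac{e^{it\,p(\mathbf{k}_1+\mathbf{k}_2)} - e^{it(p(\mathbf{k}_1)+p(\mathbf{k}_2))}}{i(p(\mathbf{k}_1+\mathbf{k}_2) - p(\mathbf{k}_1) - p(\mathbf{k}_2))}$. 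When the resonance function $\Omega = p(\mathbf{k}_1+\mathbf{k}_2) - p(\mathbf{k}_1) - p(\mathbf{k}_2)$ is small, this time factor is of order $t$, so the smallness in the denominator does not help; instead I exploit the derivative gains and losses from the operators $\partial, \bar\partial, \bar\partial^{-1}, \partial^{-1}$ in $NL_1, NL_2$ together with the homogeneous $\dot H^s$ weights to force divergence. I'd normalize so $\|\phi_N\|_{\dot H^s}\sim 1$, and then show $\|(\text{bilinear term})\|_{\dot H^s} \to \infty$, typically growing like a positive power of $N$ determined by the mismatch between $s$ and the critical scaling exponent $-1$.

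The main obstacle, and where the real work lies, is the resonance analysis: I must choose the two frequency packets $\mathbf{k}_1, \mathbf{k}_2$ so that simultaneously (i) the nonlinear multiplier coming from $NL_1 + NL_2$ (which involves division by the symbols of $\partial$ and $\bar\partial$ at frequency $\mathbf{k}_1$ or $\mathbf{k}_2$, hence must avoid the degeneracies $\xi\pm i\eta = 0$) is large, and (ii) the resonance function $\Omega(\mathbf{k}_1,\mathbf{k}_2)$ is controlled so the time integral contributes a favorable power of $N$. Balancing these against the $\dot H^s$ weights $|\mathbf{k}_1+\mathbf{k}_2|^s$ versus $|\mathbf{k}_1|^s |\mathbf{k}_2|^s$ is exactly where the threshold $s = -1$ emerges; for $s < -1$ the weights tip in favor of divergence. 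I expect one frequency to be chosen large (order $N$) and the other comparable or offset so that the output lands at a frequency where the $\dot H^s$ norm is amplified, while keeping $\|\phi_N\|_{H^s}$ bounded. Verifying that the lower-order homogeneous-norm estimate still passes through the continuous embedding $X_T \hookrightarrow C([0,T];\dot H^s)$ is routine once the embedding constant is used, so the crux is entirely the explicit Fourier-side estimate of the Duhamel term and the scaling bookkeeping that pins the borderline at $s=-1$.
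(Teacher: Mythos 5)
Your proposal follows essentially the same route as the paper: the paper implements the Molinet--Saut--Tzvetkov scheme by proving failure of $C^2$ differentiability of the data-to-solution map at the origin (Theorem \ref{dtsm}) --- which is exactly your chaining of the two estimates applied to $u = e^{itNV}\phi$, yielding $\| u_2 \|_{\dot{H}^s} \lesssim \| \phi \|_{H^s}^2$ for the second-order Duhamel term --- and contradicts it with data $\hat{\phi}$ supported on two boxes of side $c \approx N^{-2}$ located near $\xi \approx -N$ and $\xi \approx +N$, so that the resonance function satisfies $|R| \approx cN^2 \approx 1$, the high-high interaction outputs at low frequency $\approx N^{-2}$ where the homogeneous weight $|(\xi,\mu)|^s$ amplifies, and the resulting lower bound $N^{-4s-4}$ diverges precisely for $s<-1$. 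Your anticipated mechanism (two packets at large frequency, controlled resonance so the Duhamel time factor is of order one, low-frequency output boosted by the negative-$s$ weight, unimodular treatment of the $\partial^{-1}\bar{\partial}$-type multipliers, and the threshold pinned at the scaling-critical $s=-1$) matches the paper's counterexample in every essential respect.
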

Such estimates would be needed for a fixed-point argument, so in the end we conclude that for data in $\dot{H}^s (\rr^2)$, there is no proper subspace to run the contraction scheme for the Duhamel formula.

\paragraph{\textbf{Acknowledgments.}} This work is part of the PhD thesis research of the author at the University of Toronto. The author would like to thank Professors James Colliander and Peter Perry for their encouragement and for many interesting conversations about this work.

The author would also like to thank Daniel Egli and Arick Shao for reading parts of this paper and for the many useful discussions about it.

\section{Notation}
For a function $f$ we denote by $\hat{f}$ its Fourier transform in space and by $\check{f}$ the inverse Fourier transform, and by $\tilde{f}$ its space-time Fourier transform. We use the symbols $\| \cdot \|_{L^p}$ and $\| \cdot \|_{H^s}$ for the Lebesgue and Sobolev norms of a function, respectively. For two quantities $A$, $B$, we use the relation $A \lesssim B$ to indicate that there is some positive constant $C > 0$ such that $A \mik CB$, and we use the relation $A \approx B$ to indicate that there is a (possibly different) constant $C > 0$ such that $C^{-1} B \mik A \mik CB$.

Moreover we introduce spectral cut-offs in space and in space and time. We define a function $\chi \in \mathcal{S} (\rr)$ such that $supp(\hat{\chi}) \subset [-2,2]$ and $\hat{\chi} = 1$ in $[-1,1]$. First we define $\tilde{\chi}$ by $$\hat{\tilde{\chi}} (\xi) = \hat{\chi} (\xi) - \hat{\chi} (2\xi) $$
Now we define $\ph$ and $\psi$ as: 
$$ \hat{\ph} (\xi,\mu) = \hat{\tilde{\chi}} (|(\xi,\mu)|) \mbox{ and } \hat{\psi}(\tau, \xi,\mu) = \hat{\tilde{\chi}} \left(\tau - \frac{1}{4} \xi^3 + \frac{3}{4} \xi\mu^2 \right) $$
For $k\in \mathbb{N}$, $k\meg 1$ we define further the functions $\ph_k$ and $\psi_k$:
$$ \hat{\ph}_k (\xi,\mu) = \hat{\ph} \left( \frac{(\xi,\mu)}{2^k} \right) \mbox{ and } \hat{\psi}_k (\tau,\xi,\mu) = \hat{\psi}\left( \frac{(\tau, \xi,\mu)}{2^k} \right) $$
and finally we define also $\ph_0$ and $\psi_0$:
$$ \hat{\ph}_0(\xi,\mu) = \hat{\chi} (|(\xi,\mu)|) \mbox{ and } \hat{\psi}_0 (\tau,\xi,\mu) = \hat{\chi} \left(\tau -\frac{1}{4} \xi^3 - \frac{3}{4} \xi\mu^2 \right) $$
These cut-offs in frequency form partitions of unity ($\{ \hat{\ph}_k \}_{k=0}^{\infty}$ and $\{ \hat{\psi}_k \}_{k=0}^{\infty}$) and give rise to the following Littlewood-Paley operators:
$$\widehat{P_k f} (\xi,\mu) = \hat{\ph}_k (\xi,\mu) \hat{f} (\xi,\mu), \quad \quad \widetilde{Q_k u} (\tau,\xi,\mu) = \hat{\psi}_k (\tau,\xi,\mu) \tilde{u} (\tau,\xi,\mu) \mbox{ for $k\meg 0$} $$

\section{Dispersive and Strichartz Estimates}
A computation shows that 
\begin{displaymath}
\partial^3 + \bar{\partial}^3 = \frac{1}{4} \partial_{xxx}^3 - \frac{3}{4} \partial_{xyy}^3
\end{displaymath}
which turns \eqref{1} into the following equation:
\begin{equation}\label{2}
 \left\{\begin{aligned}
        \partial_t u + \frac{1}{4} \partial_{xxx}^3 - \frac{3}{4} \partial_{xyy}^3 + NL_1 (u) + NL_2 (u) = 0 \\
        u(0,x) = \phi(x)\
        \end{aligned}
 \right.
\end{equation}
Finally we investigate some of the dispersive properties of this equation. 
We consider first the linear part of the equation, taking the Fourier transform in space. This gives us the following:
\begin{displaymath}
\hat{u} (t, \xi , \mu ) = e^{it \left( \frac{1}{4} \xi^3 -\frac{3}{4} \xi \mu^2 \right)} \hat{\phi} (\xi , \mu)
\end{displaymath}
From this formula it becomes clear that we have
\begin{displaymath}
\| e^{itNV} \phi \|_{L^2_x} = \| \phi \|_{L^2_x}
\end{displaymath}
where $e^{itNV}$ is the propagator of the linear part of the Novikov-Veselov equation. On the other hand we can define the following measure in $\mathbb{R}^{2+1}$ based on the solution given by the Fourier inversion formula above:
\begin{displaymath}
\int_{\mathbb{R}^{2+1}} F(\xi, \mu, \tau) d\rho(\xi, \mu, \tau) = \int_{\mathbb{R}^2} F(\xi, \mu, P(\xi, \mu)) d\xi
\end{displaymath}
where we define $P(\xi, \mu) = \frac{1}{4} \xi^3 - \frac{3}{4} \xi \mu^2$. Computing the Hessian of $P(\xi,\mu)$ (denote it by $HP$) we can see that $detHP(\xi,\mu) = 0 $ only for $\xi = \mu = 0$, so away from 0 the hypersurface defined by this measure has non-vanishing Gaussian curvature. Following the usual proof technique of Strichartz estimates (see for example \cite{MuSc}) we can prove using the above remark for the measure $\rho $ that for any function $\phi = P_N \phi$ that is frequency localized in $\{ 1/2 \mik |(\xi, \mu)| \mik 2 \}$ (or in some other dyadic block) we have by the standard stationary phase theorem
\begin{displaymath}
\|  e^{itNV} P_N \phi \|_{L^{\infty}_x} \lesssim \dfrac{1}{\langle t \rangle} \| P_N \phi \|_{L^1_x}
\end{displaymath}
Scaling considerations for the linear equation show that from a solution $u$ we can consider another solution: $$ u_{\lambda} (t,x , y) = u \left( \frac{t}{\lambda^3}, \frac{x}{\lambda}, \frac{y}{\lambda} \right) $$

Using this,the usual $TT^{*}$ argument and the Hardy-Littlewood-Sobolev inequality we finally get an $L^2 \rightarrow L^p_t L^q_x$ estimate for $(p,q)$ satisfying $\frac{3}{p} + \frac{2}{q} = 1$, so that in the end we have for any function $\phi$ that
\begin{displaymath}
\| e^{itNV} \phi \|_{L^p_t L^q_x} \lesssim \| \phi \|_{L^2_x} \mbox{  where  } \frac{3}{p} + \frac{2}{q} = 1, \quad 3 < p \mik \infty, 2 \mik q < \infty
\end{displaymath}
Note that the diagonal Strichartz pair is the $L^5_{t,x}$ one. Also note that just by rescaling we have the following more general estimate:
$$ \| e^{itNV} \phi \|_{L^p_t L^q_x} \lesssim \| \phi \|_{\dot{H}^{\gamma}_x} \mbox{  where  } \frac{3}{p} + \frac{2}{q} = 1 - \gamma, \gamma \meg 0 $$
Again, we don't consider the case of endpoints for any $\gamma$.

Note also that the Strichartz admissibility condition would give us that there is an $\dot{H}^{-1/4}_{x,y} \rightarrow L^4_{t,x,y}$ mapping property for the group $e^{itNV}$. Of course this doesn't follow directly from the proof of Strichartz estimates (the admissibility condition doesn't normally hold for negative Sobolev spaces), but it can be established differently using the following result of Carbery, Kenig and Ziesler \cite{CKZ} -- see section 3 of the Molinet-Pilod work \cite{MP} for the formulation given here.

\begin{thm}
 Let $Q(\xi,\mu)$ be a homogeneous polynomial of degree $\meg 2$, and let $K_Q (\xi, \mu) = detHQ(\xi,\mu)$. Also let $Q(D)$ and $|K_Q (D)|^{1/8}$ be the multipliers associated to $Q(\xi,\mu)$ and $|K_Q (\xi,\mu)|^{1/8}$ respectively. Then for any $f\in L^2 (\mathbb{R}^2 )$ we have that
 $$ \| |K_Q (D)|^{1/8} e^{itQ} f \|_{L^4_{t,x,y}} \lesssim \| f \|_{L^2 (\mathbb{R}^2 )} $$
\end{thm}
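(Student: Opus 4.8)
The plan is to read the inequality as an affine-invariant adjoint restriction (extension) estimate for the graph surface $S = \{(\xi,\mu,Q(\xi,\mu)) : (\xi,\mu)\in\rr^2\}\subset\rr^3$, and to exploit that $L^4$ is an even exponent. First I would clear the multiplier: letting $g$ be the function with $\hat{g} = |K_Q|^{1/8}\hat{f}$, the object $|K_Q(D)|^{1/8}e^{itQ}f$ is exactly the extension
\[
Eg(t,x,y) = \int_{\rr^2} e^{i(x\xi+y\mu+tQ(\xi,\mu))}\,\hat{g}(\xi,\mu)\,d\xi\,d\mu,
\]
while $\|f\|_{L^2} = \||K_Q|^{-1/8}\hat{g}\|_{L^2}$. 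Thus the theorem is equivalent to $\|Eg\|_{L^4_{t,x,y}} \lesssim \||K_Q|^{-1/8}\hat{g}\|_{L^2}$, in which the weight $|K_Q|^{-1/8}$ is precisely the reciprocal square root of the density of the affine surface measure $|K_Q|^{1/4}\,d\xi\,d\mu$ on $S$. This is the geometric reformulation that makes the role of $K_Q$ transparent.

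Next I would use the even exponent. Writing $\|Eg\|_{L^4}^2 = \||Eg|^2\|_{L^2}$ and applying Plancherel in $(t,x,y)$, the space-time Fourier transform of $|Eg|^2$ is the convolution of the surface-carried measure $\hat{g}\,d\Sigma$ (the pushforward of $\hat{g}\,d\xi\,d\mu$ onto $S$) with its reflected conjugate. Resolving the resulting delta function by a change of variables reduces the matter to a weighted bilinear $L^2$ bound of the schematic form
\[
\|Eg\|_{L^4}^4 \lesssim \iint_{\rr^2\times\rr^2} \frac{|\hat{g}(\omega_1)|^2\,|\hat{g}(\omega_2)|^2}{|J(\omega_1,\omega_2)|}\,d\omega_1\,d\omega_2,
\]
where the Jacobian $J(\omega_1,\omega_2)$ measures the transversality of $S$ to itself and is controlled by $\nabla Q(\omega_1)-\nabla Q(\omega_2)$.

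The heart of the matter is to establish a lower bound of the type $|J(\omega_1,\omega_2)| \gtrsim |K_Q(\omega_1)|^{1/4}|K_Q(\omega_2)|^{1/4}$, strong enough for a Schur-type test to give $\iint \lesssim \bigl(\int |K_Q|^{-1/4}|\hat{g}|^2\bigr)^2 = \||K_Q|^{-1/8}\hat{g}\|_{L^2}^4$, which closes the estimate. Here I would use the homogeneity of $Q$: if $Q$ has degree $d$, then $\nabla Q$ is homogeneous of degree $d-1$, $D^2Q$ of degree $d-2$, and $K_Q=\det D^2Q$ of degree $2(d-2)$. Near the diagonal $\omega_1\approx\omega_2$ one has $\nabla Q(\omega_1)-\nabla Q(\omega_2)\approx D^2Q\cdot(\omega_1-\omega_2)$, which links the transversality directly to the eigenvalues of the Hessian, and hence to $K_Q$ once the one-dimensional fiber is integrated out. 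The affine invariance of the whole setup (the affine surface measure is $\mathrm{SL}(2)$-invariant, compatibly with the $L^4$ norm up to determinant factors) then lets one normalize near an arbitrary point and scale.

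The main obstacle is the degenerate variety $\{K_Q=0\}$, where $S$ flattens, the transversality bound above degenerates, and the bilinear integral threatens to diverge; it is exactly the vanishing of the affine weight there that must compensate. Making this quantitative requires a careful dyadic decomposition according to the size of $K_Q$ and the distance to its zero set, run simultaneously with the global homogeneity so that the bound is uniform across all dyadic scales of the non-compact surface $S$. This coupling of local curvature degeneration with global scaling is the delicate point, and it is precisely the content of the Carbery--Kenig--Ziesler restriction theorem, whose proof proceeds by an induction on the structure of the homogeneous polynomial $Q$; I would invoke that machinery to dispose of the degenerate regime and conclude.
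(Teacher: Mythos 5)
You should know at the outset that the paper contains no proof of this theorem: it is quoted verbatim (in the formulation of Molinet--Pilod) from the cited work of Carbery, Kenig and Ziesler, and is used as a black box to derive the estimate \eqref{l4} for the specific phase $P(\xi,\mu)=\frac14\xi^3-\frac34\xi\mu^2$, whose Hessian determinant is a constant multiple of $\xi^2+\mu^2$. Since your final step is ``I would invoke that machinery to dispose of the degenerate regime and conclude,'' your proposal is, in substance, the same move the paper makes --- an appeal to \cite{CKZ} --- preceded by a sketch of the reduction. That sketch is a fair pr\'ecis of the actual strategy behind such results: the reformulation as an adjoint restriction estimate with affine surface measure $|K_Q|^{1/4}\,d\xi\,d\mu$, the use of the even exponent via $\|Eg\|_{L^4}^2=\||Eg|^2\|_{L^2}$ and Plancherel, and the reduction to a weighted bilinear bound governed by the transversality quantity $\nabla Q(\omega_1)-\nabla Q(\omega_2)$.

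One intermediate claim, however, is genuinely wrong as stated, and you should not let it stand even in a sketch: the pointwise lower bound $|J(\omega_1,\omega_2)|\gtrsim |K_Q(\omega_1)|^{1/4}|K_Q(\omega_2)|^{1/4}$ cannot hold. It fails near the diagonal, where $J\approx D^2Q(\omega_1)\cdot(\omega_1-\omega_2)$ vanishes linearly in $|\omega_1-\omega_2|$ while the right-hand side stays bounded below when $K_Q(\omega_1)\neq 0$; and it is dimensionally inconsistent, since for $Q$ homogeneous of degree $d$ the left side is homogeneous of degree $d-1$ while $|K_Q|^{1/2}$ has degree $d-2$. The correct statement is the integrated (Schur-type) bound
$$ \sup_{(\zeta,\tau)}\ \int_{\{\omega_1+\omega_2=\zeta,\ Q(\omega_1)+Q(\omega_2)=\tau\}} \frac{|K_Q(\omega_1)|^{1/4}|K_Q(\omega_2)|^{1/4}}{|J(\omega_1,\omega_2)|}\,d\sigma \ \lesssim\ 1 $$
where the one-dimensional fiber restores the missing degree of homogeneity; the diagonal singularity is then harmless because $|J|^{-1}$ has an integrable (square-root-type) singularity along the fiber. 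Proving this uniformly --- in particular uniformly up to the zero variety $\{K_Q=0\}$ and across all dyadic scales of the non-compact surface, for an arbitrary homogeneous $Q$, using the factorization of $Q$ into linear factors --- is not a routine dyadic exercise; it is the entire content of the Carbery--Kenig--Ziesler theorem. So your proposal is acceptable exactly to the extent that it is, like the paper, a citation; read as a self-contained argument it has a false key lemma whose correct replacement is the cited theorem itself.
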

This result applies for $K(\xi,\mu) = P(\xi,\mu)$. But in this case notice that we have 
$$ detHP(\xi,\mu) = -\frac{3}{4} (\xi^2 + \mu^2) $$
So $K(D)$ is actually $\Delta$ up to a constant. So for \eqref{2} we have the following estimate:
\begin{equation}\label{l4}
\| |D|^{1/4} e^{itNV} \phi \|_{L^4_{t,x,y}} \lesssim \| \phi \|_{L^2 (\mathbb{R}^2 )} 
\end{equation}
\begin{rem}
For the similar computation concerning the Zakharov-Kuznetsov equation, see again section 3 of \cite{MP}.
\end{rem}

\section{Multilinear Estimates}
In this section -- which follows closely the section on Bilinear Estimates of \cite{MP} -- we prove bilinear and trilinear estimates that will be needed in the proofs of bilinear and trilinear estimates in $X^{s,b}$ spaces. Such estimates can be viewed as refinements of the Strichartz inequalities when the functions involved interact in a specific way with respect to their frequency localizations. Here we follow \cite{MP}, and we write these estimates in an "$X^{s,b}$ manner". But they can be written also as bilinear and trilinear estimates for properly frequency localized linear solutions $e^{itNV} \cdot$ (by the transference principle for $X^{s,b}$ spaces). 
 
First let us make the remark, that since we are asking for refinements of the Strichartz estimates, we can't rely entirely on H\"{o}lder's inequality and we have to take into consideration the interactions of the functions involved. In the $X^{s,b}$ formulations of multilinear estimates, this can be understood better  by looking at the so called "resonant" function which is defined in the following way: we consider initially the functions
$$ w(\tau, \xi, \mu) = \tau - \frac{1}{4} \xi^3 + \frac{3}{4} \xi\mu^2, \quad w_1 (\tau_1, \xi_1, \mu_1) = w(\tau_1, \xi_1, \mu_1), $$ $$ w_2 (\tau, \tau_1, \xi, \xi_1, \mu, \mu_1) = w(\tau-\tau_1, \xi-\xi_1, \mu-\mu_1) $$
and then we take a certain difference of these three function to arrive at the definition of the "resonant" function
$$ R(\xi_1, \xi-\xi_1, \mu_1, \mu-\mu_1) = w -w_1 - w_2 = $$ $$ = \frac{3}{4} \xi_1 \xi (\xi-\xi_1) - \frac{3}{4} \xi_1 (\mu-\mu_1)^2 - \frac{3}{4} (\xi-\xi_1)\mu_1^2 - \frac{3}{2} \xi\mu_1 (\mu-\mu_1) $$
which is related of course to the $Q$ localizations that were introduced in the notational section.

Before moving into the actual estimates let us state (without proof) two basic facts that will be used in the upcoming proofs. The first one is a version of the mean value theorem.
\begin{thm}\label{ap1}
 Let $I, J \subset \rr$ be two intervals and $f : J \rightarrow \rr$ be a smooth function. Then the following holds:
 $$ |\{ x \in J | f(x) \in I \}| \mik \dfrac{|I|}{\inf_{y \in J} |f'(y)|} $$
\end{thm}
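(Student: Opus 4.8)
The plan is to reduce to the case where $f$ is strictly monotonic and then invoke the fundamental theorem of calculus. First I would dispose of the degenerate case: if $\inf_{y \in J} |f'(y)| = 0$ then the right-hand side is $+\infty$ and there is nothing to prove, so I may assume $m := \inf_{y \in J} |f'(y)| > 0$. By definition of the infimum this gives $|f'(x)| \meg m$ for every $x \in J$.

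Next I would establish monotonicity. Since $f$ is smooth, $f'$ is continuous on the interval $J$, and $|f'| \meg m > 0$ means $f'$ never vanishes; as $J$ is connected, the intermediate value theorem forces $f'$ to keep a constant sign. Hence $f$ is strictly monotonic, and in particular injective. Replacing $f$ by $-f$ and $I$ by $-I$ if necessary (which changes neither side of the claimed inequality, since the set $\{x \in J : f(x) \in I\}$, the length $|I|$, and $\inf_{y\in J}|f'(y)|$ are all left unchanged) I may assume $f' > 0$, so that $f$ is strictly increasing.

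Now consider the set $E = \{ x \in J : f(x) \in I \}$. Because $f$ is continuous and strictly increasing, the preimage $f^{-1}(I)$ is an interval, and so is its intersection $E$ with the interval $J$; write $a = \inf E$ and $b = \sup E$, so that $|E| = b - a$. The core estimate is then a one-line application of the fundamental theorem of calculus: for any $a', b' \in E$ with $a' < b'$ one has $f(a'), f(b') \in I$, and hence
$$ m\,(b' - a') \mik \int_{a'}^{b'} |f'(x)| \, dx = f(b') - f(a') \mik |I|. $$
Letting $a' \searrow a$ and $b' \nearrow b$ yields $m\,|E| \mik |I|$, that is, $|E| \mik |I|/m$, which is exactly the asserted bound.

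There is no genuine obstacle here; the only points requiring a word of care are the sign-constancy of $f'$ (handled by the intermediate value theorem on the connected set $J$) and the fact that $E$ is an interval rather than a more complicated set. This latter point is precisely what lets the measure $|E|$ be read off as the length $b - a$, and it makes the endpoint-approximation argument above legitimate even when $J$ or $I$ fails to be closed.
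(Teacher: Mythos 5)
Your proof is correct, but there is nothing in the paper to compare it against: the paper introduces Theorem \ref{ap1} explicitly ``without proof,'' as a standard measure-theoretic fact to be cited in the bilinear estimates. Your argument is complete as written --- the degenerate case $\inf_{y\in J}|f'(y)|=0$ is disposed of up front, sign-constancy of $f'$ on the connected set $J$ is correctly extracted from the intermediate value theorem, and the reduction to $f$ strictly increasing plus the fundamental theorem of calculus is sound (the limiting step $a'\searrow a$, $b'\nearrow b$ even handles an unbounded $E$ and the empty case in the extended reals). It is worth noting, however, that there is a slightly shorter route, closer to the paper's billing of the result as ``a version of the mean value theorem,'' which skips monotonicity entirely: for any $x_1 < x_2$ in $E := \{x\in J : f(x)\in I\}$, the mean value theorem gives a $\xi\in(x_1,x_2)$ with $|I| \meg |f(x_2)-f(x_1)| = |f'(\xi)|\,(x_2-x_1) \meg m\,(x_2-x_1)$, so $\operatorname{diam}(E)\mik |I|/m$, and since any subset of $\rr$ has Lebesgue outer measure at most its diameter, the bound follows without knowing that $E$ is an interval. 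Your approach buys the extra structural fact that $E$ \emph{is} an interval (which makes $|E|=b-a$ transparent); the mean-value version buys brevity and works verbatim for $f$ merely differentiable rather than smooth. Either argument fully justifies the statement as used in the paper.
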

The second one is another measure theoretic tool and can be found in \cite{MST}.
\begin{lem}\label{ap2}
 In this lemma we use the notation $(\xi, \mu) \in \rr \times \rr$, i.e. the ``first'' axis corresponds to $\xi$ and the ``second'' one to $\mu$.  Let $J \subset \rr \times \rr$. Assume also that the projection in $\mu$ is contained in some set $I\subset \rr$ and that there exists some constant $C > 0$ such that $\forall \mu_0 \in I$ it holds that
 $$ |J \cap \{(\xi, \mu_0)\}| \mik C $$
 Then we have that
 $$ | J | \mik C|I| $$
\end{lem}

Now we can state and prove the bilinear estimate. 
\begin{lem}\label{lem1}
 Let $k_f , k_g $ be such that
 $$ k_f \meg 2, k_g \mik k_f - 2$$
 which can be seen as $k_g << k_f$.
 
 Then we have:
\begin{equation}\label{bl}
 \| P_{k_f} Q_{l_f} f P_{k_g} Q_{l_g} g \|_{L^2_{t,x,y}} \lesssim \frac{2^{\frac{k_g}{2}}}{2^{k_f}} 2^{\frac{l_f}{2}} 2^{\frac{l_g}{2}} \| P_{k_f} Q_{l_f} f \|_{L^2_{t,x,y}} \| P_{k_g} Q_{l_g} g \|_{L^2_{t,x,y}}
\end{equation}
\end{lem}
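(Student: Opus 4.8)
The plan is to pass to the Fourier side and reduce the estimate to a bound on the measure of the set of frequencies on which both factors can be simultaneously supported. Writing $f_1 = P_{k_f} Q_{l_f} f$ and $g_1 = P_{k_g} Q_{l_g} g$, the space-time Fourier transform of the product is the convolution $\widetilde{f_1 g_1} = \tilde{f}_1 * \tilde{g}_1$, and I would apply the Cauchy-Schwarz inequality in the convolution variables $(\tau_1,\xi_1,\mu_1)$, followed by Plancherel. This gives
$$ \| f_1 g_1 \|_{L^2_{t,x,y}}^2 \lesssim \Big( \sup_{(\tau,\xi,\mu)} |A(\tau,\xi,\mu)| \Big) \, \| f_1 \|_{L^2_{t,x,y}}^2 \, \| g_1 \|_{L^2_{t,x,y}}^2, $$
where $A(\tau,\xi,\mu)$ is the set of $(\tau_1,\xi_1,\mu_1)$ with $|(\xi_1,\mu_1)| \approx 2^{k_f}$, $|(\xi-\xi_1,\mu-\mu_1)| \approx 2^{k_g}$, $|w_1| \lesssim 2^{l_f}$ and $|w_2| \lesssim 2^{l_g}$. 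Squaring the claimed inequality, it then suffices to prove the uniform measure bound $|A(\tau,\xi,\mu)| \lesssim \frac{2^{k_g}}{2^{2k_f}} 2^{l_f} 2^{l_g}$.

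For fixed spatial frequencies, the two modulation conditions constrain $\tau_1$ to the intersection of two intervals of lengths $\approx 2^{l_f}$ and $\approx 2^{l_g}$ (since $w_1,w_2$ are affine in $\tau_1$), so integrating in $\tau_1$ first costs a factor $\min(2^{l_f},2^{l_g})$. Moreover, a nonempty intersection forces the centers to agree up to $\approx \max(2^{l_f},2^{l_g})$; by the identity $w - R = w_1 + w_2$ and the fact that $R$ is independent of $\tau_1$, this is exactly the spatial condition $|R - w| \lesssim \max(2^{l_f},2^{l_g})$. Hence $|A| \lesssim \min(2^{l_f},2^{l_g}) \, |B|$, where
$$ B = \{ (\xi_1,\mu_1) : |(\xi_1,\mu_1)| \approx 2^{k_f}, \ |(\xi-\xi_1,\mu-\mu_1)| \approx 2^{k_g}, \ |R - w| \lesssim \max(2^{l_f},2^{l_g}) \}. $$
Since $\min(2^{l_f},2^{l_g}) \max(2^{l_f},2^{l_g}) = 2^{l_f} 2^{l_g}$, the whole problem reduces to the purely spatial estimate $|B| \lesssim \frac{2^{k_g}}{2^{2k_f}} \max(2^{l_f},2^{l_g})$.

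The main step, and the one I expect to be the crux, is a nondegeneracy bound for the resonance function. Writing $R(\xi_1,\mu_1) = P(\xi_1,\mu_1) + P(\xi-\xi_1,\mu-\mu_1) - P(\xi,\mu)$, the chain rule gives $\nabla_{(\xi_1,\mu_1)} R = \nabla P(\xi_1,\mu_1) - \nabla P(\xi-\xi_1,\mu-\mu_1)$. A direct computation yields $|\nabla P(\xi,\mu)| = \frac{3}{4}(\xi^2 + \mu^2)$, which is precisely the non-vanishing of $\det HP = -\frac{3}{4}(\xi^2+\mu^2)$ away from the origin. Since $|(\xi_1,\mu_1)| \approx 2^{k_f}$ while $|(\xi-\xi_1,\mu-\mu_1)| \approx 2^{k_g}$ with $k_g \mik k_f - 2$, the triangle inequality gives $|\nabla R| \gtrsim 2^{2k_f} - 2^{2k_g} \gtrsim 2^{2k_f}$. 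With this at hand, I would split $B$ into the part where $|\partial_{\xi_1} R| \gtrsim 2^{2k_f}$ and the part where $|\partial_{\mu_1} R| \gtrsim 2^{2k_f}$, at least one of which holds at every point. On the first piece I slice at fixed $\mu_1$ and apply Theorem \ref{ap1} to $\xi_1 \mapsto R(\xi_1,\mu_1)$, bounding each slice by $\lesssim \max(2^{l_f},2^{l_g})/2^{2k_f}$; the low-frequency constraint makes the $\mu_1$-projection an interval of length $\lesssim 2^{k_g}$, so Lemma \ref{ap2} yields $\lesssim \frac{2^{k_g}}{2^{2k_f}} \max(2^{l_f},2^{l_g})$. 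The second piece is handled symmetrically with the roles of $\xi_1$ and $\mu_1$ interchanged, using that the $\xi_1$-projection also has length $\lesssim 2^{k_g}$. The only delicate point in applying Theorem \ref{ap1} is that it requires a lower bound on $|\partial_{\xi_1} R|$ over an entire slice; since $R$ is polynomial, the slice $\{ |\partial_{\xi_1} R| \gtrsim 2^{2k_f} \}$ consists of boundedly many intervals of monotonicity, so the bound is applied on each and summed with an $O(1)$ loss. Combining the two pieces gives the required spatial estimate, and tracing back through the reductions completes the proof.
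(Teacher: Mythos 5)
Your proposal is correct, and it follows the same skeleton as the paper's proof — the reduction via Cauchy--Schwarz/Plancherel to the measure bound $\sup |A_{\tau,\xi,\mu}|$, the $\tau_1$-integration yielding the factor $\min(2^{l_f},2^{l_g})$ together with the constraint $|R - w| \lesssim \max(2^{l_f},2^{l_g})$ on the spatial set $B$, and the slicing argument combining Theorem \ref{ap1} with Lemma \ref{ap2} and the observation that both projections of $B$ have length $\lesssim 2^{k_g}$ — but the crux, the nondegeneracy of the resonance function, is handled by a genuinely different and arguably cleaner argument. The paper, following the Zakharov--Kuznetsov template of \cite{MP}, splits into three subcases according to the relative sizes of $|\xi_1|$ and $|\mu_1|$: when $|\xi_1| \not\approx |\mu_1|$ it gets $|\partial_{\xi_1} R| \gtrsim 2^{2k_f}$ and slices in $\xi_1$ at fixed $\mu_1$, while in the delicate regime $|\xi_1| \approx |\mu_1|$, where $\partial_{\xi_1}R \sim \frac34(\xi_1^2 - \mu_1^2) + O(2^{2k_g})$ can vanish, it switches to $|\partial_{\mu_1} R| \gtrsim 2^{2k_f}$ and slices in $\mu_1$. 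You replace this case analysis by the single identity $|\nabla P(\xi,\mu)| = \frac34(\xi^2+\mu^2)$ — which is correct, since $|\nabla P|^2 = \frac{9}{16}(\xi^2-\mu^2)^2 + \frac94 \xi^2\mu^2 = \frac{9}{16}(\xi^2+\mu^2)^2$, reflecting that $P = \frac14\,\mathrm{Re}\,(\xi+i\mu)^3$ has an elliptic gradient despite the hyperbolic signature of $\partial_{\xi_1}R$ — so that $|\nabla_{(\xi_1,\mu_1)} R| \gtrsim 2^{2k_f}$ follows at once from the frequency separation $k_g \mik k_f - 2$, and the pointwise dichotomy on which partial derivative dominates recovers exactly the paper's two slicing modes without ever identifying the degenerate regime explicitly. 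What your route buys is a unified explanation of why the resonance cannot degenerate ($\xi_1^2-\mu_1^2$ and $\xi_1\mu_1$ cannot be simultaneously small on the shell), plus an explicit treatment of a point the paper leaves implicit: Theorem \ref{ap1} requires a lower bound on the derivative over an interval, and your observation that the slice where a fixed partial of the low-degree polynomial $R$ is large consists of boundedly many intervals closes that gap properly; what the paper's route buys is a line-by-line parallel with the Zakharov--Kuznetsov estimate, which is the comparison the author wants to highlight. Both arguments share the same conventional looseness about dyadic-shell endpoint constants (at the extreme edges of the shells with $k_g = k_f-2$ the crude triangle inequality degenerates, in your version and the paper's alike), which is standard and fixable by widening the gap or sharpening the meaning of $\approx$.
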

\begin{proof}
Let $(\tau_1 , \xi_1 , \mu_1 )$ denote the variables corresponding to $\widetilde{P_{k_f} f}$ and $(\tau-\tau_1 , \xi-\xi_1 , \mu-\mu_1 )$ the ones corresponding to $\widetilde{P_{k_g} g}$. By our assumptions we have that 
$$ |(\xi_1 , \mu_1 )| \approx 2^{k_f} >> 2^{k_g} \approx |(\xi-\xi_1 , \mu-\mu_1 )| $$
By applying successively Plancherel's identity, Young's inequality, Cauchy-Schwarz and Plancherel again we have:
$$ \| P_{k_f} Q_{l_f} f P_{k_g} Q_{l_g} g \|_{L^2_{t,x,y}} = \| \widetilde{P_{k_f} Q_{l_f} f} \ast \widetilde{P_{k_g} Q_{l_g} g} \|_{L^2_{\tau,\xi,\mu}} \mik $$ $$ \mik\sup_{\tau,\xi,\mu} |A_{\tau,\xi,\mu}|^{1/2} \| P_{k_f} Q_{l_f} f \|_{L^2_{t,x,y}} \| P_{k_g} Q_{l_g} g \|_{L^2_{t,x,y}} $$
where the set $A_{\tau,\xi,\mu}$ is defined as
$$ A_{\tau,\xi\mu} = \left\{ (\tau,\xi,\mu) | |(\xi_1 , \mu_1 )| \approx 2^{k_f}, |(\xi-\xi_1 ,\mu-\mu_1 )| \approx 2^{k_g}, \right. $$ $$ \left. |\tau_1 - P_1 | \approx 2^{l_f}, |\tau-\tau_1 - P_2 | \approx 2^{l_g} \right\} $$
where $P_1 (\xi_1 , \mu_1 ) = P(\xi_1 , \mu_1)$ and $P_2 (\xi,\xi_1, \mu,\mu_1) = P(\xi-\xi_1, \mu-\mu_1)$.

Applying the triangle inequality we get:
$$ |A_{\tau,\xi,\mu}| \mik \min(2^{l_f}, 2^{l_g}) |B_{\tau,\xi,\mu}| $$
where 
$$ B_{\tau,\xi,\mu} = \{ (\xi_1, \mu_1) |  |(\xi_1 , \mu_1 )| \approx 2^{k_f}, |(\xi-\xi_1 ,\mu-\mu_1 )| \approx 2^{k_g}, |\tau + P - R| \lesssim \max(2^{l_f}, 2^{l_g}) \} $$
for $R = R(\xi,\xi_1,\mu,\mu_1)$ the ``resonant'' function. 

Now we consider three different cases by taking into account the interaction between $\xi_1$ and $\mu_1$.
\\
\textbf{Subcase 1: $|\xi_1| >> |\mu_1|$} In this situation we follow step-by-step the proof of estimate (3.16) as it given in pages 8 and 9 of \cite{MP} (see the remark after the proof for the reasoning). 

We apply Theorem \ref{ap1} for the set $B_{\tau,\xi,\mu}$ where we fix $\mu_1$ (we call this $B_{\tau,\xi,\mu} (\mu_1 )$) after computing the following derivative:
$$| \partial_{\xi_1} R (\xi_1, \xi-\xi_1, \mu_1, \mu-\mu_1) | = \left| \frac{3}{4} (\xi_1^2 - \mu_1^2) - \frac{3}{4}[ (\xi - \xi_1 )^2 - (\mu - \mu_1 )^2] \right| $$ 
Taking into account that $|\xi_1| >> |\mu_1|$ we have:
$$ | \partial_{\xi_1} R (\xi_1, \xi-\xi_1, \mu_1, \mu-\mu_1) | \gtrsim 2^{2k_f} $$
This gives us that:
$$|B_{\tau,\xi,\mu} (\mu_1 )| \mik \dfrac{\max (2^{l_f}, 2^{l_g})}{2^{2k_f}} $$
Applying Lemma \ref{ap2} we further get that:
$$|B_{\tau,\xi,\mu}| \mik \dfrac{\max (2^{l_f}, 2^{l_g}) 2^{k_g}}{2^{2k_f}} $$
which in the end gives us that 
$$|A_{\tau,\xi,\mu}| \mik \dfrac{2^{k_g}}{2^{2k_f}}\max (2^{l_f}, 2^{l_g}) \min (2^{l_f}, 2^{l_g})  $$
\\
\textbf{Subcase 2: $|\xi_1| << |\mu_1|$} This reduces to Subcase1 since we have again the same bound 
$$ | \partial_{\xi_1} R (\xi_1, \xi-\xi_1, \mu_1, \mu-\mu_1) | \gtrsim 2^{2k_f} $$
\\ 
\textbf{Subcase 3: $|\xi_1| \approx |\mu_1|$} Now the argument used in Subcases 1 and 2 can't work, since we are considering the case where $|\partial_{\xi_1} R|$ is obviously no longer bounded below by $2^{2k_f}$. We compute first the derivative of $R$ with respect to $\mu_1$.
$$ | \partial_{\mu_1} R (\xi_1, \xi-\xi_1, \mu_1, \mu-\mu_1) | = \left| -\frac{3}{2} (\xi - \xi_1 )\mu_1 + \frac{3}{2} \xi_1 (\mu-\mu_1) - \frac{3}{2} \xi \mu + 3 \xi \mu_1 \right| \Rightarrow  $$ 
\begin{equation}\label{mu}
\begin{aligned}
\Rightarrow | \partial_{\mu_1} R (\xi_1, \xi-\xi_1, \mu_1, \mu-\mu_1) | = \\ = \left| \frac{3}{2} \xi \mu_1 - \frac{3}{2} \xi (\mu-\mu_1 )- \frac{3}{2} (\xi - \xi_1 )\mu_1 + \frac{3}{2} \xi_1 (\mu-\mu_1) \right| 
\end{aligned}
\end{equation}
Since $|(\xi_1 , \mu_1 )| \approx 2^{k_f}$ and $|\xi_1| \approx |\mu_1|$, this implies that
\begin{equation}\label{xi}
 |\xi_1| \approx |\mu_1| \approx 2^{k_f} 
\end{equation}
Moreover since $k_f >> k_g$ and as $|(\xi-\xi_1 , \mu-\mu_1 )| \approx 2^{k_g}$, we have by \eqref{xi} that
\begin{equation}\label{xi2}
|\xi| \approx |\mu| \approx 2^{k_f} 
\end{equation}
Then, going back to \eqref{mu}, we observe that by \eqref{xi} and \eqref{xi2}, the first term is of order $2^{2k_f}$ while everything else is just of order $2^{k_g}$, so we conclude in this situation that we have the following estimate:
$$  | \partial_{\mu_1} R (\xi_1, \xi-\xi_1, \mu_1, \mu-\mu_1) | \gtrsim 2^{2k_f} $$
We prove now the required estimate by following again the proof given in \cite{MP}, just by interchanging the roles of $\xi_1$ and $\mu_1$. To be a bit more specific, in this situation we fix $\xi_1$ in $B$, i.e. we consider the set $B_{\tau,\xi,\mu}(\xi_1)$ and repeat the analysis of Subcase 1.
\end{proof}
\begin{rem}
Note that in Subcases 1 and 2 of Lemma \ref{lem1} the estimate is identical to the similar situation in the Zakharov-Kuznetsov equation. The partial derivative of the "resonant" function with respect to $\xi_1$ in that case has the form:
$$\left| \frac{3}{4} (\xi_1^2 + \mu_1^2) - \frac{3}{4}[ (\xi - \xi_1 )^2 + (\mu - \mu_1 )^2] \right| $$
and one can see that this gives us the desired bound for frequencies of different sizes.
\end{rem}
Now we turn to the trilinear estimates that we will need for the modified Novikov-Veselov equation \eqref{mnv}. Again we consider only specific frequency interactions. These trilinear estimates are actually based on the bilinear estimate (Lemma \ref{lem1}) that we just showed.
\begin{lem}\label{lem2}
1) Consider dyadic numbers $k_f, k_g, k_h$ with the property
$$ k_f \meg 2, k_g \meg k_f + 2, k_f - 1 \mik k_h \mik k_f + 1 $$
Then we have the estimate
$$\| P_{k_f} Q_{l_f} f P_{k_g} Q_{l_g} g P_{k_h} Q_{l_h} h \|_{L^2_{t,x,y}} \lesssim $$ $$ \lesssim \dfrac{2^{\frac{3k_f}{2}}}{2^{k_g}} 2^{\frac{l_f}{2}} 2^{\frac{l_g}{2}} 2^{\frac{l_h}{2}} \| P_{k_f} Q_{l_f} f \|_{L^2_{t,x,y}} \| P_{k_g} Q_{l_g} g \|_{L^2_{t,x,y}} \| P_{k_h} Q_{l_h} h \|_{L^2_{t,x,y}} $$
2) Consider dyadic numbers $k_f, k_g, k_h$ with the property
$$ k_f \meg 2, k_g \mik k_f - 2, k_f - 1 \mik k_h \mik k_f + 1 $$
Then we have the estimate
$$\| P_{k_f} Q_{l_f} f P_{k_g} Q_{l_g} g P_{k_h} Q_{l_h} h \|_{L^2_{t,x,y}} \lesssim $$ $$ \lesssim  2^{\frac{k_g}{2}} 2^{\frac{l_f}{2}} 2^{\frac{l_g}{2}} 2^{\frac{l_h}{2}} \| P_{k_f} Q_{l_f} f \|_{L^2_{t,x,y}} \| P_{k_g} Q_{l_g} g \|_{L^2_{t,x,y}} \| P_{k_h} Q_{l_h} h \|_{L^2_{t,x,y}} $$
\end{lem}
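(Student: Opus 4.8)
The plan is to derive both trilinear estimates directly from the bilinear estimate of Lemma~\ref{lem1}, combined with H\"older's inequality and a Bernstein-type $L^2 \to L^\infty$ bound. In both parts the functions $f$ and $h$ are localized at comparable frequencies $\approx 2^{k_f}$, while $g$ sits at a much higher (Part 1) or much lower (Part 2) frequency. The idea is to peel off one of the two comparable functions into $L^\infty$, apply the bilinear estimate to the remaining genuine high-low pair, and then recombine the constants.

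Concretely, I would first record the Bernstein inequality for the Littlewood-Paley pieces: since $\widetilde{P_{k}Q_{l}h}$ is supported in a region of measure $\approx 2^{2k}2^{l}$ in $(\tau,\xi,\mu)$-space (an annulus of area $\approx 2^{2k}$ in $(\xi,\mu)$, times an interval of length $\approx 2^{l}$ in $\tau$), Fourier inversion and Cauchy-Schwarz give
$$ \| P_k Q_l h \|_{L^\infty_{t,x,y}} \mik \| \widetilde{P_k Q_l h} \|_{L^1_{\tau,\xi,\mu}} \mik 2^{k} 2^{l/2} \| P_k Q_l h \|_{L^2_{t,x,y}}. $$
Then H\"older's inequality in $L^2_{t,x,y} \cdot L^\infty_{t,x,y}$ yields
$$ \| P_{k_f} Q_{l_f} f \, P_{k_g} Q_{l_g} g \, P_{k_h} Q_{l_h} h \|_{L^2_{t,x,y}} \mik \| P_{k_f} Q_{l_f} f \, P_{k_g} Q_{l_g} g \|_{L^2_{t,x,y}} \, \| P_{k_h} Q_{l_h} h \|_{L^\infty_{t,x,y}}, $$
which already separates the problem into a bilinear $L^2$ factor and a single $L^\infty$ factor.

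For Part 1 the hypothesis $k_g \meg k_f + 2$ means $g$ is the high frequency and $f$ the low one, so Lemma~\ref{lem1} applies to the pair $\{g,f\}$ (with the roles of the two functions exchanged, i.e.\ $k_g$ playing the part of $k_f$ in that lemma and $k_f$ the part of $k_g$, which is legitimate since the $L^2$ norm of a product is symmetric), giving the bilinear factor $\frac{2^{k_f/2}}{2^{k_g}} 2^{l_g/2} 2^{l_f/2}$. Multiplying by the Bernstein bound $2^{k_h} 2^{l_h/2} \approx 2^{k_f} 2^{l_h/2}$ produces the combined power $\frac{2^{k_f/2}}{2^{k_g}} \cdot 2^{k_f} = \frac{2^{3k_f/2}}{2^{k_g}}$, exactly the claimed constant. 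For Part 2 the hypothesis $k_g \mik k_f - 2$ makes $f$ the high frequency and $g$ the low one, so Lemma~\ref{lem1} applies directly to $\{f,g\}$ and gives $\frac{2^{k_g/2}}{2^{k_f}} 2^{l_f/2} 2^{l_g/2}$; multiplying by the Bernstein factor $2^{k_h} 2^{l_h/2} \approx 2^{k_f} 2^{l_h/2}$, the powers of $2^{k_f}$ cancel and leave precisely $2^{k_g/2} 2^{l_f/2} 2^{l_g/2} 2^{l_h/2}$.

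I do not expect a serious obstacle, since all the analytic content has been pushed into Lemma~\ref{lem1}. The only points requiring care are checking that the frequency hypotheses of the present lemma ($k_g \meg k_f+2$ or $k_g \mik k_f-2$, with $k_f \meg 2$) indeed supply the separation $k_{\mathrm{low}} \mik k_{\mathrm{high}} - 2$ that the bilinear estimate demands, and verifying that the support measure in the Bernstein step is genuinely $\approx 2^{2k}2^{l}$. The choice of which comparable function, $f$ or $h$, to place in $L^\infty$ is immaterial because $k_f \approx k_h$, so the same bound results either way.
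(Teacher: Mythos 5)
Your proof is correct and is essentially the paper's argument in physical-space dress: your H\"older step $\| fgh \|_{L^2} \le \| fg \|_{L^2} \| h \|_{L^\infty}$ combined with the Bernstein bound $\| P_k Q_l h \|_{L^\infty} \lesssim 2^{k} 2^{l/2} \| P_k Q_l h \|_{L^2}$ is exactly the paper's Plancherel--Young step with the support-measure bound $|C_{\tau,\xi,\mu}|^{1/2} \lesssim 2^{k_h} 2^{l_h/2}$, after which both arguments invoke Lemma \ref{lem1} on the same high-low pair $(f,g)$ with identical dyadic bookkeeping. One tiny caveat: your closing remark that placing $f$ rather than $h$ in $L^\infty$ is ``immaterial'' is slightly off in Part 2, since pairing $(h,g)$ can leave only one unit of dyadic separation when $k_h = k_f - 1$ and $k_g = k_f - 2$, but your actual argument never uses that pairing, so nothing breaks.
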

\begin{proof}
 1) Our condition on the frequencies tells us that $f$ and $h$ are localized at roughly the same level, while $g$ is localized at a bigger one. We don't assume any condition for the localizations $l_f, l_g$ and $l_h$.
 
 First we apply Plancherel and we get
 $$ \| P_{k_f} Q_{l_f} f P_{k_g} Q_{l_g} g P_{k_h} Q_{l_h} h \|_{L^2_{t,x,y}} = \| \widetilde{P_{k_f} Q_{l_f} f} \ast \widetilde{P_{k_g} Q_{l_g} g} \ast \widetilde{P_{k_h} Q_{l_h} h} \|_{L^2_{\tau,\xi,\mu}} $$
Now we apply Young's inequality (and Plancherel for $P_{k_f} Q_{l_f} f P_{k_g} Q_{l_g} g$) and we have as in Lemma \ref{lem1} that the above quantity is bounded by the following
$$ \lesssim \sup_{\tau,\xi,\mu} |C_{\tau,\xi,\mu}|^{1/2} \| \widetilde{P_{k_h} Q_{l_h} h} \|_{L^2_{\tau,\xi,\mu}} \| P_{k_f} Q_{l_f} f P_{k_g} Q_{l_g} g\|_{L^2_{t,x,y}} $$
where 
$$C_{\tau,\xi,\mu} = \left\{ (\tau,\xi,\mu) | |(\xi_1 , \mu_1 )| \approx 2^{k_h}, |(\xi-\xi_1 ,\mu-\mu_1 )| \approx 2^{k_f} + 2^{k_g}, \right. $$ $$\left. |\tau_1 - P_1 | \approx 2^{l_h}, |\tau-\tau_1 - P_2 | \approx 2^{l_f} + 2^{l_g} \right\} $$
This is a similar definition to the one of $A_{\tau,\xi,\mu}$ in Lemma \ref{lem1}. But just note here that the variables $(\xi-\xi_1,\mu-\mu_1)$ correspond to the convolution $\widetilde{P_{k_f} Q_{l_f} f} \ast \widetilde{P_{k_g} Q_{l_g} g}$ and that's where the sizes of $|(\xi-\xi_1,\mu-\mu_1)|$ and $|\tau - \tau_1 - P_2 |$ come from. 

At this point we apply Lemma \ref{lem1} to the second term and Plancherel to the first:
$$ \sup_{\tau,\xi,\mu} |C_{\tau,\xi,\mu}|^{1/2} \frac{2^{\frac{k_f}{2}}}{2^{k_g}} 2^{\frac{l_f}{2}} 2^{\frac{l_h}{2}} \| P_{k_f} Q_{l_f} f \|_{L^2_{t,x,y}} \| P_{k_g} Q_{l_g} g \|_{L^2_{t,x,y}} \| P_{k_h} Q_{l_h} h \|_{L^2_{t,x,y}} $$
We use a crude bound for the set $C_{\tau,\xi,\mu}$ and we have
$$ |C_{\tau,\xi,\mu}|^{1/2} \lesssim \min(2^{\frac{l_h}{2}}, \max(2^{\frac{k_f}{2}}, 2^{\frac{k_f}{2}})) 2^{k_h} $$
which in the end gives us the desired inequality
$$ \| P_{k_f} Q_{l_f} f P_{k_g} Q_{l_g} g P_{k_h} Q_{l_h} h \|_{L^2_{t,x,y}} \lesssim $$ $$ \lesssim \dfrac{2^{\frac{3k_f}{2}}}{2^{k_g}} 2^{\frac{l_f}{2}} 2^{\frac{l_g}{2}} \min(2^{\frac{l_h}{2}}, \max(2^{\frac{k_f}{2}}, 2^{\frac{k_f}{2}})) \times $$ $$ \times \| P_{k_f} Q_{l_f} f \|_{L^2_{t,x,y}} \| P_{k_g} Q_{l_g} g \|_{L^2_{t,x,y}} \| P_{k_h} Q_{l_h} h \|_{L^2_{t,x,y}} \lesssim $$ $$ \lesssim  \dfrac{2^{\frac{3k_f}{2}}}{2^{k_g}} 2^{\frac{l_f}{2}} 2^{\frac{l_g}{2}} 2^{\frac{l_h}{2}} \| P_{k_f} Q_{l_f} f \|_{L^2_{t,x,y}} \| P_{k_g} Q_{l_g} g \|_{L^2_{t,x,y}} \| P_{k_h} Q_{l_h} h \|_{L^2_{t,x,y}} $$

The last inequality is just a crude bound, if $l_h$ is smaller than $l_f$ or $l_g$ we just have equality, if $l_h$ is the largest of all three then we have actually a better estimate, but we don't really need it for our purposes.

2) In this situation $f$ and $h$ are frequency localized at roughly the same level, while $g$ is frequency localized at a smaller one. 

For the proof of the estimate we follow the exact same method as in part 1 of this lemma, and again we will consider one case for the space-time localizations, namely that $l_h \mik l_f, l_g$. We estimate $C_{\tau,\xi,\mu}$ in the same way, but now we take into consideration that 
$$\max(k_f, k_g, k_h) = k_f \mbox{ or } k_h \mbox{ (and $k_f \approx k_h$)} $$
which means that 
$$ |C_{\tau,\xi,\mu}|^{1/2} \| P_{k_f} Q_{l_f} f P_{k_g} Q_{l_g} g \|_{L^2_{t,x,y}} \| P_{k_h} Q_{l_h} h \|_{L^2_{t,x,y}} \lesssim $$ $$ \lesssim \dfrac{2^{\frac{k_g}{2}} 2^{k_h}}{2^{k_f}} 2^{\frac{l_f}{2}} 2^{\frac{l_g}{2}} \min(2^{\frac{l_h}{2}}, \max(2^{\frac{k_f}{2}}, 2^{\frac{k_f}{2}})) \times $$ $$ \times \| P_{k_f} Q_{l_f} f \|_{L^2_{t,x,y}} \| P_{k_g} Q_{l_g} g \|_{L^2_{t,x,y}} \| P_{k_h} Q_{l_h} h \|_{L^2_{t,x,y}} \approx $$ $$ \approx 2^{\frac{k_g}{2}} 2^{\frac{l_f}{2}} 2^{\frac{l_g}{2}} \min(2^{\frac{l_h}{2}}, \max(2^{\frac{k_f}{2}}, 2^{\frac{k_f}{2}})) \times $$ $$ \times \| P_{k_f} Q_{l_f}  f \|_{L^2_{t,x,y}} \| P_{k_g} Q_{l_g}  g \|_{L^2_{t,x,y}} \| P_{k_h} Q_{l_h} h \|_{L^2_{t,x,y}} \lesssim $$ $$ \lesssim 2^{\frac{k_g}{2}} 2^{\frac{l_f}{2}} 2^{\frac{l_g}{2}} 2^{\frac{l_h}{2}} \| P_{k_f} Q_{l_f}  f \|_{L^2_{t,x,y}} \| P_{k_g} Q_{l_g}  g \|_{L^2_{t,x,y}} \| P_{k_h} Q_{l_h} h \|_{L^2_{t,x,y}} $$
The last inequality follows as in 1).

\end{proof}

\section{Well-Posedness for the Novikov-Veselov equation}
In this section our goal is to prove Theorem \ref{wpnv}. We introduce first the Bourgain-type spaces where the contraction scheme will be implemented. The $X^{s,b}$ space associated to the Novikov-Veselov equation is the following:
\begin{displaymath}
X^{s,b} = \left\{ u \in L^2_{t,x} | \langle (\xi, \mu) \rangle^s \left\langle \tau - \frac{1}{4}\xi^3 + \frac{3}{4}\xi \mu^2 \right\rangle^b \tilde{u} (\tau, \xi, \mu) \in L^2_{\tau, \xi, \mu} \right\} \mbox{ with norm }
\end{displaymath}
\begin{displaymath}
\| u \|_{X^{s,b}} = \left\| \langle (\xi, \mu) \rangle^s \left\langle \tau - \frac{1}{4} \xi^3 + \frac{3}{4} \xi \mu^2 \right\rangle^b \tilde{u} (\tau, \xi, \mu) \right\|_{L^2_{\tau, \xi, \mu}}
\end{displaymath}
\subsection{Bilinear Estimates}
In order to be able to apply Bourgain's machinery (see for example \cite{B}, \cite{T}), we will need the following proposition.
\begin{prop}\label{propbl}
 The following inequality holds true: $$ \| NL_1 (u,v) \|_{X^{s,-1/2+2\ee}} \lesssim \| u \|_{X^{s,1/2+\ee}} \| v \|_{X^{s,1/2+\ee}} $$ for any $s >  \frac{1}{2}$ and any $\ee > 0$. 
\end{prop}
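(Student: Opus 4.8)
The plan is to treat $NL_1(u,v)=\frac34\partial(u\,\bar\partial^{-1}\partial v)$, the bilinear form polarizing $NL_1(u)=\frac34\partial(u\,\bar\partial^{-1}\partial u)$. The first observation is that the composite operator $\bar\partial^{-1}\partial$ has Fourier multiplier $\frac{\xi-i\mu}{\xi+i\mu}$, which is bounded (indeed unimodular) away from the origin; hence it is a zeroth-order multiplier that commutes with the $P_k$ and preserves every $X^{s,b}$ norm up to constants, and I would discard it at the outset. Thus it suffices to bound $\|\partial(uv)\|_{X^{s,-1/2+2\ee}}$, the outer $\partial$ contributing a factor $\approx 2^{k_{\mathrm{out}}}$ at the output frequency. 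I would then pass to the dual formulation: since $(X^{s,-1/2+2\ee})^{*}=X^{-s,1/2-2\ee}$, the estimate is equivalent to the trilinear bound
$$\Big|\int \partial(uv)\,\bar w\,dx\,dy\,dt\Big|\lesssim \|u\|_{X^{s,1/2+\ee}}\|v\|_{X^{s,1/2+\ee}}\|w\|_{X^{-s,1/2-2\ee}},$$
after which Plancherel turns the left side into a convolution integral of the three space-time Fourier transforms over the region $\xi=\xi_1+\xi_2$, $\tau=\tau_1+\tau_2$.

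Next I would insert the Littlewood-Paley decompositions $P_{k}Q_{l}$ in both the spatial frequency and the modulation variable, and normalize by setting $c^{(u)}_{k_1,l_1}=2^{sk_1}\langle 2^{l_1}\rangle^{1/2+\ee}\|P_{k_1}Q_{l_1}u\|_{L^2}$ (analogously for $v$, and with exponents $-s,\,1/2-2\ee$ for $w$), so that the three norms on the right become the $\ell^2$ norms of the $c$'s. The sum then splits into frequency interaction regimes. In the \emph{high-low} regime, where one input, say the $k_2$-piece, satisfies $k_2\le k_1-2$ and therefore $k_3\approx k_1$, I would bound the inner integral by Cauchy-Schwarz as $\|(P_{k_1}Q_{l_1}u)(P_{k_2}Q_{l_2}v)\|_{L^2}\,\|P_{k_3}Q_{l_3}w\|_{L^2}$ and apply Lemma \ref{lem1} to the first factor. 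The gain $2^{k_2/2}/2^{k_1}$ from Lemma \ref{lem1} cancels the output derivative $2^{k_3}\approx 2^{k_1}$, and after inserting the Sobolev weights the surviving frequency factor is exactly $2^{(1/2-s)k_2}$; the factors $2^{l_1/2}2^{l_2/2}$ supplied by Lemma \ref{lem1} combine with the modulation weights to give geometrically summable exponents $2^{-\ee l_1}2^{-\ee l_2}2^{-(1/2-2\ee)l_3}$ (this uses $0<\ee<1/4$). The low-frequency sum $\sum_{k_2}2^{(1/2-s)k_2}$ converges precisely because $s>1/2$, and the matched sum over $k_1\approx k_3$ is closed by Cauchy-Schwarz in $\ell^2$; the symmetric low-high regime is identical after interchanging $u$ and $v$.

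In the remaining \emph{high-high} regime, where $k_1\approx k_2$ and $k_3\lesssim k_1$ (including all comparable frequencies), Lemma \ref{lem1} no longer applies because there is no separation of scales, so here I would use the $L^4$ Strichartz inequality \eqref{l4}. By the transference principle for $X^{s,b}$ with $b>1/2$, estimate \eqref{l4} upgrades to $\|P_k f\|_{L^4_{t,x,y}}\lesssim 2^{-k/4}\|P_k f\|_{X^{0,1/2+\ee}}$. Bounding the inner integral by $2^{k_3}\|P_{k_1}u\|_{L^4}\|P_{k_2}v\|_{L^4}\|P_{k_3}w\|_{L^2}$ and applying this on the two high factors produces the net frequency weight $2^{(1+s)k_3}2^{-(1/2+2s)k_1}$; summing the geometric series in $k_1\ge k_3$ leaves $2^{(1/2-s)k_3}$, again summable exactly for $s>1/2$, with the modulation variable absorbed by the embedding $\|\cdot\|_{L^2}\le\|\cdot\|_{X^{0,b}}$ on $w$ and the $b>1/2$ margin on $u,v$.

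Finally I would collect the regimes and perform the outer $\ell^2$ summation, using Cauchy-Schwarz in the matched frequency index and the summability of the isolated (low or output) frequency. I expect the main obstacle to be bookkeeping the high-high interaction: one must verify that the quarter-derivative gain from the Carbery-Kenig-Ziesler estimate \eqref{l4} is exactly what brings that regime down to the same threshold $s>1/2$ coming from the high-low bilinear estimate, rather than forcing a worse exponent. Keeping careful track of where the output derivative $2^{k_3}$ lands, and of the near-orthogonality of the modulation pieces, is the delicate part.
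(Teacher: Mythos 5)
Your proposal is correct and follows essentially the same route as the paper's proof: dualize to a trilinear integral, discard the unimodular $\bar\partial^{-1}\partial$ multiplier so only the output derivative $|(\xi,\mu)|$ survives in the kernel, decompose with $P_k Q_l$, use the bilinear estimate of Lemma \ref{lem1} in the separated high-low regime (where the gain $2^{k_g/2}/2^{k_f}$ cancels the output derivative, leaving $2^{(1/2-s)k_g}$ and summable modulation exponents), and use the Carbery--Kenig--Ziesler $L^4$ bound in its $X^{s,b}$ form \eqref{x4} on the two comparable factors otherwise, with both regimes closing at $s>1/2$. Your three-regime organization merely consolidates the paper's five cases (its Cases 1--3 are your comparable-input regime, its Cases 4--5 your two separated regimes), so the arguments coincide.
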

\begin{rem} 
The proof of this proposition applies in the same way for $NL_2 (u,v) = \frac{3}{4} \bar{\partial} (u \partial^{-1} \bar{\partial} u)$. This shows that for the nonlinear part of the Novikov-Veselov equations, the following bilinear estimate holds: 
\begin{equation}\label{nl}
\| NL(u) \|_{X^{s,-1/2+2\ee}} = \| NL_1 (u,u) + NL_2 (u,u) \|_{X^{s,-1/2+2\ee}} \lesssim \| u \|_{X^{s,1/2+\ee}}^2 
\end{equation}
 
\end{rem}
\begin{proof}
We will use the functions $w, w_1, w_2$ that were defined in the previous section.

A standard duality argument transforms the estimate as shown below: 
$$ \| NL_1 (u,v) \|_{X^{s,-1/2+2\ee}} \lesssim \| u \|_{X^{s,1/2+\ee}} \| v \|_{X^{s,1/2+\ee}} \Rightarrow $$ $$ \Rightarrow \sup_{\| h' \|_{X^{-s, 1/2-2\ee}} =1} \left| \int_{\mathbb{R}^{2+1}} h' (t,x,y) NL(u,v) (t,x,y) dt dx dy \right| \lesssim $$ $$ \lesssim \| u \|_{X^{s,1/2+\ee}} \| v \|_{X^{s,1/2+\ee}} $$
We want to eliminate the $X^{s,b}$ norms and be left only with $L^2$ ones. First we apply Plancherel to the left-hand side and we rewrite the right-hand side:
$$ \sup_{\| \tilde{h} \|_{X^{-s, 1/2-2\ee}} =1} \left| \int_{\mathbb{R}^{2+1}} \tilde{h'} (\tau, \xi, \mu) \widetilde{NL}(u,v) (\tau, \xi, \mu) d\tau d\xi d\mu \right| \lesssim $$ $$ \lesssim \| \langle (\xi , \mu) \rangle^s \langle w \rangle^{1/2+\ee} \tilde{u} \|_{L^2_{\tau, \xi,\mu}} \| \langle (\xi , \mu) \rangle^s \langle w \rangle^{1/2+\ee} \tilde{v} \|_{L^2_{\tau, \xi,\mu}} $$

We make the following definitions: $$ \tilde{f} (\tau_1,\xi_1,\mu_1) = |\tilde{u} (\tau_1,\xi_1,\mu_1) \langle w_1 \rangle^{1/2+\ee} \langle(\xi_1,\mu_1)\rangle^s |, $$ $$ \tilde{g} (\tau-\tau_1, \xi-\xi_1, \mu-\mu_1) = |\tilde{v}(\tau-\tau_1, \xi-\xi_1, \mu-\mu_1) \langle w_2 \rangle^{1/2+\ee} \langle(\xi-\xi_1, \mu-\mu_1) \rangle^s $$ $$ \tilde{h} (\tau, \xi, \mu) = |\tilde{h'} (\tau,\xi,\mu) \langle w \rangle^{1/2-2\ee} \langle(\xi,\mu) \rangle^{-s}| $$

Computing the convolutions coming from $\widetilde{NL}(u,v)$ we restate our estimate once more:
$$ I = \int_{\mathbb{R}^6} K \tilde{f} (\tau_1,\xi_1,\mu_1) \tilde{g} (\tau-\tau_1, \xi-\xi_1, \mu-\mu_1) \tilde{h} (\tau,\xi,\mu) d\tau d\tau_1 d\xi d\xi_1 d\mu d\mu_1 \lesssim $$ $$ \lesssim \| f \|_{L^2_{\tau,\xi,\mu}} \| g \|_{L^2_{\tau,\xi,\mu}} \| h \|_{L^2_{\tau,\xi,\mu}} $$ where the function $K$ is defined as $$ K(\tau,\tau_1,\xi,\xi_1,\mu.\mu_1) = $$ $$ \dfrac{|i\xi+\mu||i(\xi-\xi_1) + (\mu-\mu_1)|\langle (\xi,\mu) \rangle^s}{|-i(\xi-\xi_1) + (\mu-\mu_1)| \langle w \rangle^{1/2-\ee} \langle (\xi_1,\mu_1) \rangle^s \langle w_1 \rangle^{1/2+\ee} \langle (\xi-\xi_1, \mu-\mu_1 ) \rangle^s \langle w_2 \rangle^{1/2+\ee} } = $$ $$ = \dfrac{|(\xi,\mu)|\langle (\xi,\mu) \rangle^s}{\langle w \rangle^{1/2-\ee} \langle (\xi_1,\mu_1) \rangle^s \langle w_1 \rangle^{1/2+\ee} \langle (\xi-\xi_1, \mu-\mu_1 ) \rangle^s \langle w_2 \rangle^{1/2+\ee} } $$ 
In the rest of the proof, we'll localize $f, g, h$ in certain frequencies (i.e. restrict the range of $(\xi,\mu), (\xi_1 , \mu_1)$ and $(\xi-\xi_1 , \mu-\mu_1)$). When this happens,  we'll call the frequencies corresponding to $\tilde{f}$ by $k_f$, and we use the same notation for $\tilde{g}, \tilde{h}$. Then $I$ restricted to these frequencies will be called $I_{k_f , k_g , k_h}$, specifically we'll have:
$$ I_{k_f , k_g , k_h} = \int_{\rr^6} K_{k_f , k_g , k_h}  \widetilde{f} (\tau_1,\xi_1,\mu_1) \widetilde{g} (\tau-\tau_1, \xi-\xi_1, \mu-\mu_1) \widetilde{h} (\tau,\xi,\mu) d\tau d\tau_1 d\xi d\xi_1 d\mu d\mu_1 $$

We'll break the proof into several cases dealing with the interactions between different frequencies (with respect to space and not time).

\paragraph{\textbf{Case 1: Low-Low-Low Frequencies}}
First we consider the case where $\tilde{f}, \tilde{g}, \tilde{h}$ have their $(\xi, \mu)$ supports in approximately the same region, a dyadic shell of size $2^k$ for $k$ small. We consider the case where
$$k_f , k_g , k_h \mik 1 $$
This case is trivial for $s \meg 1$ since in such a situation we have that $K \lesssim 1$ and we use Cauchy-Schwarz to throw $h$ in $L^2$ and then use the $X^{s,b}$ version of \eqref{l4} which reads as:
\begin{equation}\label{x4}
 \left\| \mathcal{F}^{-1}_{t,x,y} \left( \frac{|\xi|^{1/4} \tilde{f}}{\langle w \rangle^{1/2+\ee}} \right) \right\|_{L^4_{t,x,y}} \lesssim \| f \|_{L^2_{t,x,y}}
\end{equation}
for $f$ and $g$. Notice that this gives us a gain of $1/4$ of derivative, so the range of $s$ can be improved. In more detail (and using that $\langle w \rangle^{1/2+\ee} \gtrsim 1$ and similarly for $w_1$, $w_2$) we have:
$$ I_{LLL} = \sum_{k_f , k_g , k_h \mik 1} I_{k_f , k_g , k_h } \lesssim $$ $$ \lesssim  \sum_{k_f , k_g , k_h \mik 1} \dfrac{2^{k_h (s+1)}}{2^{k_f s} 2^{k_g s}} \| P_{k_h} h \|_{L^2_{t,x,y}}  \times $$ $$ \times \frac{2^{\frac{k_f}{4}}}{2^{\frac{k_f}{4}}} \left\| \mathcal{F}^{-1}_{t,x,y}\left( \frac{\tilde{f}}{\langle w \rangle^{1/2+\ee}} \right) \right\|_{L^4_{t,x,y}} \frac{2^{\frac{k_f}{4}}}{2^{\frac{k_f}{4}}}\left\| \mathcal{F}^{-1}_{t,x,y}\left( \frac{\tilde{f}}{\langle w \rangle^{1/2+\ee}} \right) \right\|_{L^4_{t,x,y}} \lesssim $$ $$ \lesssim \sum_{k_f , k_g , k_h \mik 1} \dfrac{2^{k_h (s+1)}}{2^{k_f (s+1/4)} 2^{k_g (s+1/4)}} \| P_{k_h} h \|_{L^2_{t,x,y}} \| P_{k_f} f \|_{L^2_{t,x,y}} \| P_{k_g} g \|_{L^2_{t,x,y}} $$
Now since $k_f \approx k_g \approx k_h$, we'll be able to apply Cauchy-Schwarz in all the frequencies for 
$$ s+1 - 2s -\frac{1}{2} < 0 \Rightarrow s > \frac{1}{2} $$
as follows for some $\ee' > 0$:
$$ \sum_{k_f \approx k_g \approx k_h} 2^{-\ee' k_g} \| P_{k_h} h \|_{L^2_{t,x,y}} \| P_{k_f} f \|_{L^2_{t,x,y}} \| P_{k_g} g \|_{L^2_{t,x,y}}  \lesssim $$ $$\lesssim \left(\sum_{k_g} \| P_{k_g} g \|_{L^2_{t,x,y}}^2 \right)^{1/2} \sum_{k_f \approx k_h} \| P_{k_h} h \|_{L^2_{t,x,y}} \| P_{k_f} f \|_{L^2_{t,x,y}} \left( \sum_{k_g} 2^{-2\ee' k_g} \right)^{1/2} \lesssim $$ $$ \lesssim \| g \|_{L^2_{t,x,y}} \sum_k \| P_k h \|_{L^2_{t,x,y}} \| P_k f \|_{L^2_{t,x,y}} \lesssim $$ $$ \lesssim \| g \|_{L^2_{t,x,y}}  \left( \sum_k \| P_k f \|_{L^2_{t,x,y}}^2 \right)^{1/2}  \left( \sum_k \| P_k h \|_{L^2_{t,x,y}}^2 \right)^{1/2} \lesssim $$ $$ \lesssim \| g \|_{L^2_{t,x,y}} \| f \|_{L^2_{t,x,y}} \| h \|_{L^2_{t,x,y}} $$

\paragraph{\textbf{Case 2: High-High-High Frequencies}}
Again all three frequencies are comparable. But now we consider the following set:
$$ \{ k_f , k_g , k_h | k_f , k_g \meg 2, k_g - 1 \mik k_f \mik k_g + 1, k_f - 1 \mik k_h \mik k_f + 1, k_g - 1 \mik k_h \mik k_g + 1 \} $$
which we call $J$.

We note that for each triplet $(k_f , k_g , k_h )$ we can follow the exact same proof as in Case1. The fact that all three frequencies are roughly the same allows us to add them in the same way, so again for $s > \frac{1}{2}$ (and just $s meg 1$ as stated in the theorem) we have the desired bound for 
$$ I_{HHH} = \sum_{J} I_{k_f , k_g , k_h } $$

\paragraph{\textbf{Case 3: High-High-Low Frequencies}}
We consider the interaction between high frequencies for $f$ and $g$ and low ones for $h$ (which is possible by the convolution in $I$). The set of frequencies is the following:
$$ \{ k_f , k_g , k_h | k_f \meg 2, k_h \mik k_f - 2, k_f - 1 \mik k_g \mik k_f + 1 \} = J $$
There is a symmetric case where the roles of $k_f$ and $k_g$ are interchanged, but the estimates are the same. The same method as in Cases1 and 2 can be used although the situation is even better since each term $I_{k_f , k_g , k_h }$ has the biggest frequencies in the denominator. Taking $s > \frac{1}{2}$ as before we can prove the desired estimate.

\paragraph{\textbf{Case 4: High-Low-High Frequencies}}
This is the most interesting among all the cases, where we'll need to use more tools and not just the Cauchy-Schwarz inequality and the Strichartz estimates for \eqref{1}. Unlike Case 3, the fact that $h$ is frequency localized at a high level makes the handling of the derivative that is introduced by the nonlinearity problematic (so this case is not symmetric to the previous one). The set of frequencies is the following:
$$ \{ k_f , k_g , k_h | k_f \meg 2, k_g \mik k_f - 2, k_f - 1 \mik k_h \mik k_f + 1 \} = J $$
By just applying the Cauchy-Schwarz inequality and applying the Strichartz estimates, we can't eliminate the derivative that was introduced by the nonlinearity. We have to use the functions $w, w_1 , w_2$ to counterbalance the loss of derivative in the numerator. 

We will use Lemma \ref{lem1} to deal with the $H-L-H$ case as it is done in the bottom half of page 11 of \cite{MP}, we include the argument here for completeness. We first express $I_{HLH}$ (which the form of $I$ for this case) with respect to the required localizations:
$$I_{HLH} = \sum_{k_f, k_g, k_h \in J} I_{k_f, k_g, k_h} $$
We further decompose each term in this sum with respect to the $Q_l$ operators:
$$ I_{k_f, k_g, k_h} = \sum_{l_f, l_g, l_h} I_{k_f, k_g, k_h}^{l_f, l_g, l_h} $$
Applying Cauchy-Schwarz and Plancherel, for each term in the left-hand side we have the following bound taking into consideration the localizations that we are imposing:
$$ I_{k_f, k_g, k_h}^{l_f, l_g, l_h} \lesssim \frac{2^{k_f}}{2^{s k_g}} 2^{(-1/2 + 2\ee) l_h} 2^{(-1/2 - \ee) l_f} 2^{(-1/2 - \ee) l_g} \times $$ $$ \times \| P_{k_f} Q_{l_f} f P_{k_g} Q_{l_g} g \|_{L^2_{t,x,y}} \| P_{k_h} Q_{l_h} h \|_{L^2_{t,x,y}} $$
Now we apply Lemma \ref{lem1} and bound the above quantity by the following:
$$ \lesssim 2^{-(s-1/2) k_g} 2^{(-1/2 + 2\ee) l_h} 2^{-\ee l_f} 2^{-\ee l_g} \| P_{k_f} Q_{l_f} f \|_{L^2_{t,x,y}} \| P_{k_g} Q_{l_g} g \|_{L^2_{t,x,y}} \| P_{k_h} Q_{l_h} h \|_{L^2_{t,x,y}} $$
We are ready now to sum over all the indices. Clearly all terms for $l_f, l_g, l_h$ are summed easily, and then we apply Cauchy-Schwarz for $k_g$ to get
$$ I_{HLH} \lesssim \| g \|_{L^2_{t,x,y}} \sum_{k_f, k_h \in J} \| P_{k_f} f \|_{L^2_{t,x,y}} \| P_{k_h} h \|_{L^2_{t,x,y}} $$
which by the fact that $k_f \approx k_h$ is actually
$$\| g \|_{L^2_{t,x,y}} \sum_{k_f} \| P_{k_f} f \|_{L^2_{t,x,y}} P_{k_f} h \|_{L^2_{t,x,y}} $$
and by applying Cauchy-Schwarz with respect to $k_f$ we finally have that:
$$ I_{HLH} \lesssim \| g \|_{L^2_{t,x,y}} \left( \sum_{k_f} \| P_{k_f} f \|_{L^2_{t,x,y}}^2 \right)^{1/2} \left( \sum_{k_f} \| P_{k_f} h \|_{L^2_{t,x,y}}^2 \right)^{1/2} \Rightarrow$$
$$ \Rightarrow I_{HLH} \lesssim \| f \|_{L^2_{t,x,y}} \| g \|_{L^2_{t,x,y}}  \| h \|_{L^2_{t,x,y}} $$
which is the required estimate.
\paragraph{\textbf{Case 5: Low-High-High Frequencies}}
This last case can be treated in the exact same way as Case 4, with the roles of $k_f$ and $k_g$ interchanged.
\end{proof}

\subsection{The Fixed-Point Argument}
Proposition \ref{propbl} of the previous subsection is the main tool for the fixed-point method in $X^{s,b}$ spaces. 

Let us recall some basic facts about these spaces from \cite{T}. First, we define a variation of them, we call them $X^{s,b}_{\dl}$ for some $0 \mik \dl \mik 1$ through the norm:
$$ \| u \|_{X^{s,b}_{\dl}} = \inf_{v(t) = u(t), t\in [0,\dl]} \| v \|_{X^{s,b}} $$
We have the following theorem.
\begin{thm}\label{xsbthm}
The $X^{s,b}_{\dl}$ spaces (for some $\dl \in (0,1)$) have the following properties: 
$$ 1. \quad \| \chi(t) e^{itNV} f \|_{X^{s,b}_{\dl}} \lesssim \| f \|_{H^s (\rr^2)} \mbox{ for any $s,b \in \rr$} $$
$$ 2. \quad \left\| \chi(t) \int_0^t e^{i(t-t')NV} F(t') dt' \right\|_{X^{s,b}_{\dl}} \lesssim \| F \|_{X^{s,b-1}_{\dl}} $$
 for any $s \in \rr$ and $-\frac{1}{2} < b-1 \mik 0$.
$$ 3. \quad \| u \|_{X^{s,b'}_{\dl}} \lesssim \dl^{b-b'} \| u \|_{X^{s,b}_{\dl}} \mbox{ for any $s\in \rr$ and $-\frac{1}{2} < b' < b < \frac{1}{2} $} $$
where $\chi$ is a $C^{\infty}_0 (\rr)$ function that is equal to 1 in $[-1,1]$.
\end{thm}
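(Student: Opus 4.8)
The plan is to reduce all three statements to one-dimensional-in-time estimates after taking the spatial Fourier transform, since the spatial weight $\langle(\xi,\mu)\rangle^s$ is inert and the modulation weight can be conjugated to a weight centered at the origin. Throughout I write $p = P(\xi,\mu) = \frac{1}{4}\xi^3 - \frac{3}{4}\xi\mu^2$ for the dispersion symbol, and I recall that at a fixed spatial frequency the $X^{s,b}$ norm is, up to the factor $\langle(\xi,\mu)\rangle^s$, the weighted temporal norm $\|\langle\tau - p\rangle^b\,\widetilde{u}\|_{L^2_\tau}$ (note $w = \tau - p$).

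For property 1 I would compute the space-time Fourier transform of $\chi(t)e^{itNV}f$ directly. Since $e^{itNV}f$ has spatial transform $e^{itp}\hat f(\xi,\mu)$, which is supported on the characteristic surface $\{\tau = p\}$, multiplying by $\chi(t)$ convolves in the $\tau$ variable and produces $\widetilde{\chi e^{itNV}f}(\tau,\xi,\mu) = \hat\chi(\tau - p)\,\hat f(\xi,\mu)$. Substituting into the norm and integrating in $\tau$ gives $\|\chi e^{itNV}f\|_{X^{s,b}}^2 = \big(\int \langle\sigma\rangle^{2b}|\hat\chi(\sigma)|^2\,d\sigma\big)\,\|f\|_{H^s}^2$, and the $\sigma$-integral converges for every $b$ because $\hat\chi$ is Schwartz. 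Since $\chi \equiv 1$ on $[-1,1]\supset[0,\dl]$, the function $\chi e^{itNV}f$ is an admissible extension of $e^{itNV}f|_{[0,\dl]}$, so the same bound holds for the restricted norm $\|\cdot\|_{X^{s,b}_\dl}$.

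Property 2 is the \textbf{main obstacle}. After the spatial transform and after factoring out $e^{itp}$ (which shifts the weight $\langle\tau - p\rangle$ to $\langle\tau\rangle$), the estimate reduces at each fixed $(\xi,\mu)$ to the one-dimensional claim that $G(t) = \chi(t)\int_0^t g(t')\,dt'$ satisfies $\|\langle\tau\rangle^{b}\,\widehat G\|_{L^2_\tau} \lesssim \|\langle\tau\rangle^{b-1}\,\hat g\|_{L^2_\tau}$ for $\frac{1}{2} < b \mik 1$. I would prove this by writing $\int_0^t g = \int \hat g(\tau)\,\frac{e^{it\tau}-1}{i\tau}\,d\tau$ and splitting the $\tau$-integral into $|\tau|\mik 1$ and $|\tau|>1$. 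On $|\tau|\mik 1$ I Taylor-expand $\frac{e^{it\tau}-1}{i\tau}$ into a convergent series of monomials $t^{k+1}$ with coefficients $\int_{|\tau|\mik 1}\tau^k\hat g\,d\tau$, each term being $\chi(t)$ times a Schwartz function, hence bounded in every $H^b_t$. On $|\tau|>1$ I split $\frac{e^{it\tau}-1}{i\tau}$ into $\frac{e^{it\tau}}{i\tau}$ and the constant $-\frac{1}{i\tau}$: the first produces $\hat\chi * (\mathbf{1}_{|\tau|>1}\hat g/i\tau)$, where the gain $1/\tau$ matches $\langle\tau\rangle^{b}$ against $\langle\tau\rangle^{b-1}$ precisely because $b\mik 1$, while the constant term contributes $\chi(t)$ times $\int_{|\tau|>1}\hat g/i\tau\,d\tau$, whose convergence by Cauchy--Schwarz against $\langle\tau\rangle^{b-1}$ forces $b > \frac{1}{2}$. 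These two constraints are exactly the hypothesis $-\frac{1}{2} < b-1 \mik 0$, and I would then pass to the restricted norms via the extension mechanism of property 1.

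For property 3 I would again fix the spatial frequency and conjugate by $e^{itp}$, reducing to the purely temporal estimate $\|\eta(\cdot/\dl)\,g\|_{H^{b'}_t} \lesssim \dl^{b-b'}\|g\|_{H^{b}_t}$ for $-\frac{1}{2} < b' < b < \frac{1}{2}$, where $\eta \in C^\infty_0(\rr)$ equals $1$ on $[0,1]$. I would take a near-optimal extension $v$ of $u$ with $\|v\|_{X^{s,b}} \lesssim \|u\|_{X^{s,b}_\dl}$, note that $\eta(t/\dl)v$ is an admissible competitor for the $X^{s,b'}_\dl$ infimum, and invoke the temporal estimate. The latter is a standard consequence of interpolation and scaling; the hypothesis $|b|,|b'| < \frac{1}{2}$ is exactly what makes multiplication by the rescaled cutoff bounded on these time-Sobolev spaces with the displayed gain in $\dl$, and this is the step that fails outside the range $(-\frac{1}{2},\frac{1}{2})$.
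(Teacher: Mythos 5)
Your proposal is correct, but note that the paper itself contains no proof of Theorem \ref{xsbthm}: it is recalled verbatim as a collection of standard facts from \cite{T}, and what you have written out is in substance exactly the standard argument in that reference (the linear, inhomogeneous, and time-localization lemmas for generic dispersion relations), carried out after conjugating away the symbol $P(\xi,\mu)$ at each fixed spatial frequency. All three steps are sound: the identity $\widetilde{\chi e^{itNV}f}(\tau,\xi,\mu) = \hat\chi\bigl(\tau - P(\xi,\mu)\bigr)\hat f(\xi,\mu)$ gives property 1 for every $b$ since $\hat\chi$ is Schwartz; your three-way splitting of $(e^{it\tau}-1)/(i\tau)$ gives property 2; and the rescaled-cutoff multiplier bound on $H^{b}_t$, valid precisely for exponents of modulus less than $\tfrac12$, gives property 3, with the extension/restriction bookkeeping for the $\dl$-norms handled correctly in each case (in particular $\eta(t/\dl)v$ is a legitimate competitor because $\eta \equiv 1$ on $[0,1]$). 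One small inaccuracy worth flagging: in property 2 the constraint $b \mik 1$ does \emph{not} come from the term $\hat\chi \ast \bigl(\mathbf{1}_{|\tau|>1}\hat g/ i\tau\bigr)$ as you assert. By Peetre's inequality $\langle\tau\rangle^{b} \lesssim \langle\sigma\rangle^{b}\langle\tau-\sigma\rangle^{|b|}$ and the rapid decay of $\hat\chi$, that weighted convolution bound holds for every real $b$, and $\langle\tau\rangle^{b}/|\tau| \approx \langle\tau\rangle^{b-1}$ on $|\tau|>1$ regardless of the sign of $b-1$; the only binding constraint produced by your decomposition is $b > \tfrac12$, coming from the Cauchy--Schwarz convergence of $\int_{|\tau|>1}\hat g/(i\tau)\,d\tau$ against $\langle\tau\rangle^{b-1}$. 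So your argument in fact proves the estimate on a range larger than stated, and the hypothesis $-\tfrac12 < b-1 \mik 0$ of the theorem is a (sufficient, standard) subrange that your proof covers; this misattribution is harmless but should be corrected if the proof is written up.
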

Now we are ready to give the proof of Theorem \ref{wpnv}
\begin{proof}[Proof of Theorem \ref{wpnv}]
We write Duhamel's formula for \eqref{2}:
$$ u(t,x,y) = \chi(t) e^{itNV} \ph(x,y) + \chi(t) \int_0^t e^{i(t-t')NV} NL(u)(t' , x,y) dt' $$
with a function $\chi$ as before. We evaluate $u$ in the $X^{s, 1/2 + \ee}_{\dl}$ norm for $s > \frac{1}{2}$, some $\ee > 0$, and some $0 < \dl < 1$ small (to be chosen later), and we apply successively 1, 2, 3 of Theorem \ref{xsbthm} and, finally, the estimate \eqref{nl} (which is a consequence of Proposition \ref{propbl}):
$$ \| u \|_{X^{s, 1/2 + \ee}_{\dl}} \lesssim \| \phi \|_{H^s(\rr^2)} + \left\| \chi(t) \int_0^t e^{i(t-t')NV} NL(u)(t' ,x,y) dt' \right\|_{X^{s,1/2+\ee}_{\dl}}  \lesssim $$ $$ \lesssim \| \phi \|_{H^s(\rr^2)} + \| NL(u) \|_{X^{s, -1/2 + \ee}_{\dl}} \lesssim $$ $$ \lesssim \| \phi \|_{H^s(\rr^2)} + \dl^{\ee} \| NL(u) \|_{X^{s, -1/2 + 2\ee}_{\dl}} \lesssim $$ $$ \lesssim \| \phi \|_{H^s(\rr^2)} + \dl^{\ee} \| u \|_{X^{s, 1/2 + \ee}_{\dl}}^2 $$
Since $\ee > 0$, we can apply the fixed-point argument by choosing an appropriate $\dl = \dl(\| \phi \|_{H^s (\rr^2)})$ and this finishes the proof of the Theorem.
\end{proof}

\section{Well-Posedness for the modified Novikov-Veselov equation}
We will follow the same lines for the the proof of Theorem \ref{wpmnv}. The fixed-point argument will take place in the same $X^{s,b}$ spaces.
\subsection{Trilinear Estimates}
In this situation we will need a trilinear estimate in $X^{s,b}$ spaces. It has the following form:
\begin{prop}\label{proptrl}
 Let $s > 1$ and define the quantity
 $$mNL_1 (u,v,w) = \frac{3}{4} \partial u \bar{\partial}^{-1} \partial (v \bar{w} ) $$
 Then we have the inequality:
\begin{equation}\label{xsbm}
 \| mNL_1 (u,v,w) \|_{X^{s,-1/2+2\ee}} \lesssim \| u \|_{X^{s,1/2+\ee}} \| v \|_{X^{s,1/2+\ee}} \| w \|_{X^{s,1/2+\ee}}
\end{equation}
for any $\ee > 0$.
\end{prop}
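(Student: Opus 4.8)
The plan is to mirror the proof of Proposition~\ref{propbl} step by step, replacing the bilinear input (Lemma~\ref{lem1}) by the trilinear Lemma~\ref{lem2} in the delicate regimes. First I would dualize: testing $mNL_1(u,v,w)$ against $h'$ with $\|h'\|_{X^{-s,1/2-2\ee}}=1$ and applying Plancherel reduces \eqref{xsbm} to a quadrilinear bound
$$ I=\int_{\rr^{9}} K\,\tilde f\,\tilde g\,\tilde j\,\tilde h \lesssim \|f\|_{L^2}\|g\|_{L^2}\|j\|_{L^2}\|h\|_{L^2}, $$
where $f,g,j$ are attached to $u,v,\bar w$ and $h$ to the dual function, the integration runs over $(\tau,\xi,\mu,\tau_1,\xi_1,\mu_1,\tau_2,\xi_2,\mu_2)$, and $K$ collects the symbols of the operators together with the weights $\langle(\cdot)\rangle^{s}$ and $\langle w_\bullet\rangle^{1/2+\ee}$ (resp. $\langle w\rangle^{1/2-2\ee}$ for $h$), exactly as the kernel in Proposition~\ref{propbl}.

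The decisive preliminary is the size of $K$. Since $\partial$ and $\bar\partial$ both have symbols of magnitude $\tfrac12|(\xi,\mu)|$, the composite $\bar\partial^{-1}\partial$ acting on $v\bar w$ has a symbol of magnitude $\approx 1$; thus, in contrast with $NL_1$ (where the outer $\partial$ produced a derivative on the output frequency), the two interior derivatives cancel and the only surviving derivative is the single factor $|(\xi_1,\mu_1)|$ coming from $\partial u$. Writing $2^{k_f},2^{k_g},2^{k_j},2^{k_d}$ for the dyadic sizes of $u,v,w$ and of the output,
$$ |K|\lesssim \frac{2^{k_f}\,\langle(\xi,\mu)\rangle^{s}}{\langle(\xi_1,\mu_1)\rangle^{s}\langle(\xi_2,\mu_2)\rangle^{s}\langle(\xi_3,\mu_3)\rangle^{s}}\cdot\frac{1}{\langle w_1\rangle^{1/2+\ee}\langle w_2\rangle^{1/2+\ee}\langle w_3\rangle^{1/2+\ee}\langle w\rangle^{1/2-2\ee}}. $$
The essential new feature, compared with the bilinear estimate, is that the derivative sits on an \emph{input} frequency rather than on the output.

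I would then decompose dyadically in all four space frequencies and, where needed, in the modulation variables through the $Q_l$ operators. In every case I would peel the dual factor off in $L^2$ by Cauchy--Schwarz, $I_{\mathrm{loc}}\lesssim \|FGJ\|_{L^2}\|H\|_{L^2}$ with $\|H\|_{L^2}\mik\|h\|_{L^2}$, and estimate the product of the three input pieces. When all three input frequencies are comparable I would bound $\|FGJ\|_{L^2}$ by H\"older with three copies of the $L^6_{t,x,y}$ Strichartz estimate (the diagonal $\gamma=1/6$ member of the family from Section~3), which leaves comfortable room. When two input frequencies are comparable and the third is separated, I would instead invoke Lemma~\ref{lem2}: part~(1) when one input dominates the two comparable ones, part~(2) when one input is dominated by them, exactly as Case~4 of Proposition~\ref{propbl} used Lemma~\ref{lem1}. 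The modulation factors $2^{l_\bullet/2}$ produced by Lemma~\ref{lem2} are absorbed by the weights $\langle w_\bullet\rangle^{-1/2-\ee}$, leaving summable series $\sum 2^{-\ee l_\bullet}$, and the gained power of the smallest frequency compensates the loss $2^{k_f}$ in $|K|$.

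I expect the main obstacle — and the reason the threshold is $s>1$ rather than the $s>\tfrac12$ of Proposition~\ref{propbl} — to be the configuration in which two of the inputs and the output are simultaneously at a comparable high frequency $2^{k}$ while the third input is low, say $u,v$ high with $w$ low. There the output weight $\langle(\xi,\mu)\rangle^{s}\approx 2^{ks}$ cancels only one of the three denominators $2^{ks}$, so after applying part~(2) of Lemma~\ref{lem2} (which supplies $2^{k_j/2}$ in the low frequency) the surviving scalar is of the form $2^{k(1-s)}2^{k_j(1/2-s)}$. Summing the low frequency $k_j$ is harmless for $s>\tfrac12$, but the high scale multiplies a product of \emph{three} $\ell^2$-sequences $\|P_k u\|,\|P_k v\|,\|P_k h\|$ at the common level $2^{k}$; such a triple dyadic sum converges precisely when $1-s<0$, forcing $s>1$. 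Making this bookkeeping uniform over the several sub-configurations, and checking that the symmetric contribution $v\leftrightarrow w$ and the analogous terms $mNL_2,mNL_3,mNL_4$ are handled identically, is the only part that requires genuine care once the kernel bound and the pairing of functions in each case are fixed.
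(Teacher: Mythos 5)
Your proposal is correct and follows essentially the same route as the paper's proof: duality to a quadrilinear integral, dyadic decomposition in space frequencies and modulations, the $L^6$ Strichartz estimate in $X^{s,b}$ form for the all-comparable case, and parts (1) and (2) of Lemma \ref{lem2} for the separated-frequency cases, with your identified worst configuration (two inputs and the output high, one input low, yielding the factor $2^{(1-s)k}2^{(1/2-s)k_g}$ and the threshold $s>1$) being exactly the paper's Case 2. The only cosmetic difference is that you place the derivative weight on the input frequency of $u$ (the literally correct symbol count, since $\bar{\partial}^{-1}\partial$ is order zero) while the paper writes $|(\xi,\mu)|$ on the output; in all the regimes that matter these are comparable, so the bookkeeping and the resulting thresholds ($s>3/4$ in the easy cases, $s>1$ overall) coincide.
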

\begin{rem}
 As before, the proof should apply as well to the other nonlinearities (defined analogously) $mNL_{2,3,4} (u,v,w)$, so that we'll have in the end
 $$ \| mNL_1 (u) + mNL_2 (u) + mNL_3 (u) + mNL_4 (u) \|_{X^{s,-1/2+2\ee}} \lesssim \| u \|_{X^{s,1/2+\ee}}^3 $$
 \end{rem}
\begin{proof}
As in the bilinear situation, we'll break the proof into several cases by taking again Fourier localizations (with respect to the space variables) for the different functions involved.

First we rewrite the estimate in its dual form as before. So \eqref{xsbm} is equivalent to
$$II = \int_{\mathbb{R}^9} K(\tau,\tau_1,\tau_2,\xi,\xi_1,\xi_2,\mu,\mu_1,\mu_2) \tilde{e} (\tau_1,\xi_1,\mu_1) \tilde{f} (\tau_2-\tau_1, \xi_2-\xi_1, \mu_2-\mu_1) \times $$ $$\times\tilde{g} (\tau-\tau_2,\xi-\xi_2,\mu-\mu_2) \tilde{h}(\tau,\xi,\mu) d\tau d\tau_1 d\tau_2 d\xi d\xi_1 d\xi_2 d\mu d\mu_1 d\mu_2 \lesssim $$ $$ \lesssim \| e \|_{L^2_{\tau,\xi,\mu}} \| f \|_{L^2_{\tau,\xi,\mu}} \| g \|_{L^2_{\tau,\xi,\mu}} \| h \|_{L^2_{\tau,\xi,\mu}} $$ 
where 
$$K(\tau,\tau_1,\tau_2,\xi,\xi_1,\xi_2,\mu,\mu_1,\mu_2) = $$ $$\dfrac{|(\xi,\mu)|\langle (\xi,\mu) \rangle^s}{\langle w \rangle^{\frac{1}{2}-2\ee} \langle (\xi_1,\mu_1) \rangle^s \langle w_1 \rangle^{\frac{1}{2}+\ee} \langle (\xi-\xi_1, \mu-\mu_1 ) \rangle^s \langle w_{3} \rangle^{\frac{1}{2}+\ee}  \langle (\xi_2 - \xi_1, \mu_2 -\mu_1 ) \rangle^s \langle w_{4} \rangle^{\frac{1}{2}+\ee} }  $$
for
$$ w_{3} = w(\tau_2 - \tau_1, \xi_2-\xi_1,\mu_2-\mu_1), \quad \quad w_{4} = w(\tau-\tau_2, \xi-\xi_2,\mu-\mu_2) $$
and $\tilde{e}, \tilde{f}, \tilde{g}, \tilde{h}$ defined in an analogous way to the bilinear case.

Again we use $k_e, k_f, k_g, k_h$ to denote the frequencies corresponding to the functions $e,f,g,h$.

\paragraph{\textbf{Case 1: High-High-High-High and Low-Low-Low-Low Interactions}}
These two situations can be treated in the same as for the bilinear estimate. Assume that
$$k_e \approx k_f \approx k_g \approx k_h \approx k $$
where $k$ is either $\mik 1$ or $>>1$. We apply Cauchy-Schwarz and the $L^6$ Strichartz estimate (which ends up in a $1/6$ loss of derivative) in its $X^{s,b}$ form:
$$ \left\| \mathcal{F}^{-1} \left( \dfrac{\widetilde{P_{k_f} f}}{\langle w \rangle^{1/2+\ee}} \right) \right\|_{L^6_{t,x,y}} \lesssim 2^{\frac{k_f}{6}} \| f \|_{L^2_{t,x,y}} $$
We have:
$$ II_{k_e, k_f, k_g, k_h} \lesssim \dfrac{2^{(s+1)k}}{2^{3sk}} \| P_{k_h} h \|_2 \times $$ $$ \times \left\| \mathcal{F}^{-1} \left( \dfrac{\widetilde{P_{k_e} e}}{\langle w \rangle^{1/2+\ee}} \right) \mathcal{F}^{-1} \left( \dfrac{\widetilde{P_{k_f} f}}{\langle w \rangle^{1/2+\ee}} \right) \mathcal{F}^{-1} \left( \dfrac{\widetilde{P_{k_g} g}}{\langle w \rangle^{1/2+\ee}} \right) \right\|_2 \lesssim $$ $$ \lesssim \dfrac{2^{(s+3/2)k}}{2^{3sk}} \| e \|_2 \| f \|_2 \| g \|_2 \| h \|_2 $$
Now we sum all the pieces in the High-High-High-High case and we have:
$$ II_{HHHH} = \sum_{k >> 1} II_{k_f, k_g, k_h} \sum_{k >> 1} \dfrac{2^{(s+3/2)k}}{2^{3sk}} \| P_{k_e} e \|_2 \| P_{k_f} f \|_2 \| P_{k_g} g \|_2 \| P_{k_h} h \|_2 $$
and by assuming that 
$$ s+\frac{3}{2} - 3s < 0 \Rightarrow s > \frac{3}{4} $$
we can apply Cauchy-Schwarz for every piece and conclude that:
$$ II_{HHHH} \lesssim \| e \|_2 \| f \|_2 \| g \|_2 \| h \|_2 $$
Note that in this particular case we have a better bound for $s$ than the one stated in the Theorem.

\paragraph{\textbf{Case 2: Low-High-High-High and High-Low-High-High Interactions}}
We will treat only the case of High-Low-High-High Interactions since the case of Low-High-High-High Interactions is easier. This is because $h$ is localized at a frequency level that is smaller than all other functions, so the derivative coming from the nonlinearity can be easily balanced.

As before, we decompose $II$ with respect to the $Q$ operator as well and we have:
$$ II_{k_e, k_f, k_g, k_h} = \sum_{l_e, l_f, l_g, l_h} II_{k_e, k_f, k_g, k_h}^{l_e, l_f, l_g, l_h} $$
We assume the following relation for the frequencies:
$$ k_h \approx k_e \approx k_f >> k_g $$
We apply Cauchy-Schwarz and 2) of Lemma \ref{lem2} with $e$ in the role of $h$, and we have that
$$ II_{k_e, k_f, k_g, k_h}^{l_f, l_g, l_h} \lesssim $$ $$ \lesssim \dfrac{2^{(s+1)k_h} 2^{\frac{k_g}{2}}}{2^{s k_e} 2^{s k_f} 2^{s k_g}} \dfrac{2^{\frac{l_e}{2}} 2^{\frac{l_f}{2}} 2^{\frac{l_g}{2}}}{2^{(1/2-2\ee)l_h} 2^{(1/2+\ee)l_e} 2^{(1/2+\ee)l_f}  2^{(1/2+\ee)l_g}} \times $$ $$ \times \| P_{k_e} Q_{l_e} e \|_2 \| P_{k_f} Q_{l_f} f \|_2 \| P_{k_g} Q_{l_g} g \|_2 \| P_{k_h} Q_{l_h} h \|_2$$
By our condition on the frequencies we have that 
$$\dfrac{2^{(s+1)k_h} 2^{\frac{k_g}{2}}}{2^{s k_e} 2^{s k_f} 2^{s k_g}} \approx 2^{(s+1-2s) k_h} 2^{(1/2-s) k_g} $$
We impose the following condition on $s$:
$$ s+1-2s < 0 \Rightarrow s > 1 $$
And now we can sum first all $l$ frequencies using Cauchy-Schwarz as: 
$$ \sum_{l_e, l_f, l_g, l_h} II_{k_e, k_f, k_g, k_h}^{l_e, l_f, l_g, l_h} \lesssim $$ $$ \lesssim \sum_{l_e, l_f, l_g, l_h} 2^{(s+1-2s) k_h} 2^{(1/2-s) k_g} \dfrac{1}{2^{(1/2-2\ee)l_h} 2^{\ee l_e} 2^{\ee l_f}  2^{\ee l_g}} \times $$ $$ \times \| P_{k_e} Q_{l_e} e \|_2 \| P_{k_f} Q_{l_f} f \|_2 \| P_{k_g} Q_{l_g} g \|_2 \| P_{k_h} Q_{l_h} h \|_2 \lesssim $$ $$ \lesssim 2^{(s+1-2s) k_h} 2^{(1/2-s) k_g}  \| P_{k_e} e \|_2 \| P_{k_f} f \|_2 \| P_{k_g} g \|_2 \| P_{k_h} h \|_2 $$
Using our condition on $s$ we can apply Cauchy-Schwarz for the $k$ frequencies as well and finally conclude that:
$$ II_{HLHH} = \sum_{k_h \approx k_e \approx k_f >> k_g} \sum_{l_e, l_f, l_g, l_h} II_{k_e, k_f, k_g, k_h}^{l_e, l_f, l_g, l_h} \lesssim $$ $$ \lesssim \sum_{k_h \approx k_e \approx k_f >> k_g} 2^{(s+1-2s) k_h} 2^{(1/2-s) k_g}  \| P_{k_e} e \|_2 \| P_{k_f} f \|_2 \| P_{k_g} g \|_2 \| P_{k_h} h \|_2 \Rightarrow $$ $$ \Rightarrow II_{HLHH} \lesssim \| e \|_2 \| f \|_2 \| g \|_2 \| h \|_2 $$
There are more situations where the High-Low-High Interactions can be imposed differently on $e,f,g$ but they are all symmetric, so the same proof works.

\paragraph{\textbf{Case 3: Low-High-Low-High and High-Low-High-Low Interactions}}
Again as in Case 2, we won't deal with the Low-High-Low-High Interactions since they can be treated in the same way as the interactions in Case 2 (they are actually easier since the derivative coming from the nonlinearity can be easily balanced as it is on the ``low level''). 

For the High-Low-High-Low interactions we consider the following condition on frequencies:
$$ k_h \approx k_f >> k_e \approx k_g $$
We use again the same decomposition as in Case 2:
$$ II_{k_e, k_f, k_g, k_h} = \sum_{l_e, l_f, l_g, l_h} II_{k_e, k_f, k_g, k_h}^{l_e, l_f, l_g, l_h} $$
and apply Cauchy-Schwarz for each piece, but now combined with 1) of Lemma \ref{lem1}:
$$II_{k_e, k_f, k_g, k_h}^{l_e, l_f, l_g, l_h} \lesssim $$ $$ \lesssim \dfrac{2^{(s+1)k_h} 2^{\frac{3 k_g}{2}}}{2^{s k_e} 2^{(s+1) k_f} 2^{s k_g}} \dfrac{2^{\frac{l_e}{2}} 2^{\frac{l_f}{2}} 2^{\frac{l_g}{2}}}{2^{(1/2-2\ee)l_h} 2^{(1/2+\ee)l_e} 2^{(1/2+\ee)l_f}  2^{(1/2+\ee)l_g}} \times $$ $$ \times \| P_{k_e} Q_{l_e} e \|_2 \| P_{k_f} Q_{l_f} f \|_2 \| P_{k_g} Q_{l_g} g \|_2 \| P_{k_h} Q_{l_h} h \|_2$$  
Using the condition on the $k$ frequencies we have:
$$ \dfrac{2^{(s+1)k_h} 2^{\frac{3 k_g}{2}}}{2^{s k_e} 2^{(s+1) k_f} 2^{s k_g}} \approx  2^{(3/2-2s) k_g} $$
Imposing the condition 
$$ \frac{3}{2} - 2s < 0 \Rightarrow s > \frac{3}{4} $$
we can apply Cauchy-Schwarz to add up all the $k$ and $l$ frequency pieces as in Case 2 and get again the required estimate: 
$$ II_{HLHL} \lesssim \| e \|_2 \| f \|_2 \| g \|_2 \| h \|_2 $$
Note that in this case as well we have a better bound for $s$ than the one stated in the Theorem.

\end{proof}
\begin{rem}
 Note that there is no High-Low-Low-Low interactions case. Also note that there are other more complicated cases with interactions on three different levels but they can be treated in the similar ways as the interactions above. 
\end{rem}
\subsection{The Fixed-Point Argument}
Applying Proposition \ref{proptrl} on a solution of \ref{mnv} given by the Duhamel formula, and using the properties of the $X^{s,b}$ spaces from Theorem \ref{xsbthm} we have the estimate:
$$ \| u \|_{X^{s, 1/2 + \ee}_{\dl}} \lesssim \| \phi \|_{H^s(\rr^2)} + \dl^{\ee} \| u \|_{X^{s, 1/2 + \ee}_{\dl}}^3 $$
We can see then from this that the proof of Theorem \ref{wpmnv} can be given in the exact same manner as the one of Theorem \ref{wpnv}, so it won't be repeated here.

\section{Ill-Posedness Issues for the Novikov-Veselov equation}
In this final section we will prove Theorem \ref{ipnv}. This will be done by proving a failure of differentiability at the origin of the data-to-solution map of \eqref{1}.
\begin{thm}\label{dtsm}
 For any $s < -1$, there is no $T > 0$ such that the data-to-solution map of \eqref{1} 
 $$ NV(t) : \phi \rightarrow u(t), \quad t\in [0,T] $$ is $C^2$ at 0 as a map from $\dot{H}^s (\mathbb{R}^2 )$ to $\dot{H}^s (\mathbb{R}^2 )$.
\end{thm}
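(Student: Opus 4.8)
The plan is to disprove the quadratic estimate that $C^2$-regularity of the data-to-solution map would entail, by exhibiting an explicit family of frequency-localized data on which it fails. Since $NV(t)(0)=0$ and the Duhamel map for \eqref{1} has no affine part, $C^2$-differentiability at $0$ as a map $\dot{H}^s(\rr^2)\to\dot{H}^s(\rr^2)$ forces the second-order Taylor coefficient to be a bounded symmetric bilinear form. Writing $u=NV(t)(\gamma\phi)$, inserting the expansion $u=\gamma\,e^{itNV}\phi+\gamma^2 u_2(t)+o(\gamma^2)$ into Duhamel's formula and matching the $\gamma^2$ terms identifies this coefficient (up to sign) with the second Picard iterate
$$u_2(t)=\int_0^t e^{i(t-t')NV}\,NL\!\left(e^{it'NV}\phi\right)dt',\qquad NL=NL_1+NL_2.$$
Thus, if $NV(t)$ were $C^2$ at $0$ for some $T>0$, then for a fixed $t\in(0,T]$ we would have
$$\|u_2(t)\|_{\dot{H}^s}\lesssim\|\phi\|_{\dot{H}^s}^2\qquad(\star)$$
for all $\phi$, and I would produce a sequence $\phi_N$ violating $(\star)$ for every $s<-1$.

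For the construction I would take real data $\phi_N$ with $\widehat{\phi_N}=\mathbf 1_{D}+\mathbf 1_{-D}$, where $D$ is a thin box centered at a high frequency $\zeta_0=(N,M)$, $|\zeta_0|\approx N$, with sides $\alpha$ (transverse) and $\beta$ (longitudinal) to be optimized. The mechanism I want to exploit is the \emph{high$\times$high$\to$low} self-interaction $\zeta_1\in D$, $\zeta_2\in -D$, whose output $\zeta=\zeta_1+\zeta_2$ lies in the low-frequency box $D-D$ around the origin, where the weight $|\zeta|^{s}$ is large for $s<0$. After Plancherel the time integration contributes $\int_0^t e^{it'R}\,dt'$ with $R=w-w_1-w_2$ the resonance function of Section 4; its modulus is $\approx t$ only where $|R|\lesssim t^{-1}$. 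By the computation behind Lemma \ref{lem1} one has $|\partial_{\xi_1}R|,|\partial_{\mu_1}R|\approx N^2$ generically, so a generic box is strongly non-resonant; the saving grace is that in the near-antipodal regime $\zeta_2\approx-\zeta_1$ the leading $N^2$ part of $\nabla R$ cancels and $R$ reduces to a term linear in the low output $\zeta$ (with one-directional gradient $\approx N^2$) plus a quadratic remainder $\approx N|\zeta|^2$. Choosing $\alpha\lesssim N^{-2}$ (thin in the fast direction of $R$) and $\beta\lesssim N^{-1/2}$ (to kill the quadratic remainder) keeps $|R|\lesssim 1$ throughout the interaction, so the Duhamel integral is genuinely of size $\approx t$.

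On the resonant support one has $\widehat{u_2}(\zeta)\approx t\,|n(\zeta_1,\zeta_2)|\,(\mathbf 1_D*\mathbf 1_{-D})(\zeta)$, where $n$ is the symmetrized symbol of $NL_1+NL_2$. I would then compare $\|\phi_N\|_{\dot{H}^s}^2\approx N^{2s}|D|$ against the lower bound for $\|u_2(t)\|_{\dot{H}^s}$ obtained by integrating $|\zeta|^{2s}|\widehat{u_2}(\zeta)|^2$ over $D-D$. The resulting ratio is a fixed power of $N$ (times $t$), and the construction is tuned so that its exponent changes sign exactly at the scaling-critical index. Indeed, \eqref{1} is invariant under $u\mapsto \lambda^2 u(\lambda^3 t,\lambda x,\lambda y)$, and a short computation gives $\|u_\lambda(0)\|_{\dot{H}^s}=\lambda^{s+1}\|\phi\|_{\dot{H}^s}$, so $\dot{H}^s$ is scale-invariant precisely at $s=-1$. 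Optimizing $\alpha,\beta$ under the two resonance constraints above is arranged so that $\|u_2(t)\|_{\dot{H}^s}/\|\phi_N\|_{\dot{H}^s}^2\to\infty$ as $N\to\infty$ for every $s<-1$, contradicting $(\star)$ and proving the theorem.

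The hard part will be the interplay between the resonance and the structure of the symbol $n$. In the near-antipodal regime the symbols of $NL_1$ and $NL_2$ partially cancel: their sum, evaluated at $\zeta_1\approx\zeta_0$, $\zeta_2\approx-\zeta_0$, reduces to a multiple of the projection of the output $\zeta$ onto a single direction, and that direction coincides with the fast direction of $R$. Hence the derivative gain of the nonlinearity is suppressed exactly where the interaction is resonant, and the effective symbol is of size $\approx\alpha$ rather than $|\zeta|$. Tracking this cancellation honestly — rather than using the crude bound $|n|\approx|\zeta|$, which would overshoot the threshold — together with an anisotropic choice of $D$ and an accounting of both $R$ and $n$ across the box, is what pins the blow-up threshold to the scaling value $s=-1$, and is the technical heart of the argument.
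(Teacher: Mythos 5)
Your proposal is correct and follows the same overall strategy as the paper: reduce $C^2$-regularity at $0$ to the quadratic bound $\|u_2(t)\|_{\dot H^s}\lesssim\|\phi\|_{\dot H^s}^2$ on the second Picard iterate, then defeat it with a high$\times$high$\to$low interaction of boxes at frequency $\pm N$ of thickness $\approx N^{-2}$, tuned so that the resonance is $O(1)$; your final ratio $t\,N^{-4s-4}$ is exactly the paper's exponent, blowing up precisely for $s<-1$. The executions differ in an instructive way. The paper takes two \emph{offset} boxes $D_1=[-N-c,-N]\times[-2c,-c]$ and $D_2=[N+2c,N+3c]\times[3c,4c]$ with $c\approx N^{-2}$, so the output frequency sits at distance $\approx c$ from the origin: this keeps the singular weight $|\zeta|^{2s}$ away from $\zeta=0$, makes $|R|\approx cN^2\approx 1$, and lets it use the crude symbol size $|(\xi,\mu)|\approx c$ for $NL_1$ alone (treating $NL_2$ as ``similar'' and absorbing unimodular factors as phases), at the cost of having to bound and subtract the self-interaction pieces $I$, $II$ ($\lesssim N^{-s-4}$) via the triangle inequality. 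Your exactly antipodal choice $D,-D$ buys genuinely real data (the paper's $\phi$ is in fact not Hermitian-symmetric, despite the real-valuedness convention) and a cleaner lower bound, since the outputs of $D+D$, $D-D$, $-D-D$ are frequency-disjoint so no subtraction is needed; but it forces you to face two issues the paper's offset sidesteps: the non-integrability of $|\zeta|^{2s}$ at the origin (handled because your effective symbol vanishes linearly in $\zeta$; for $s\le-2$ you should restrict the norm to the shell where the fast coordinate is $\approx\alpha$ rather than integrate over all of $D-D$), and the genuine cancellation in $NL_1+NL_2$. Your identification of that cancellation is correct and is the sharpest point of the proposal: symmetrizing at $\zeta_2\approx-\zeta_1$ the summed symbol is $\approx\tfrac{3i}{2}\,\mathrm{Re}(zz_1^2)/|z_1|^2$, while the linear part of the resonance is $\nabla P(\zeta_1)\cdot\zeta=\tfrac34\,\mathrm{Re}(zz_1^2)$, so the symbol is literally proportional to $R_{\mathrm{lin}}/N^2$ and has size $\approx\alpha\approx N^{-2}$ on the resonant set; your two constraints $\alpha\lesssim N^{-2}$ and $\beta\lesssim N^{-1/2}$ (which also controls the $O(|z|^2/N)$ symbol error, and note $\beta$ cancels from the final ratio, so the paper's isotropic $c\times c$ box is the special case $\beta\approx\alpha$) are exactly right. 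It is worth noting that this verification is strictly more careful than the paper: the paper's remark about absorbing phases into $g$ and $h$ ignores possible cancellation between $NL_1$ and $NL_2$, and its argument survives only because in the offset configuration $R\approx 1$ forces the summed symbol to still be of size $\approx R/N^2\approx c$ --- a fact the paper never checks but your computation supplies.
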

Deriving Theorem \ref{ipnv} from Theorem \ref{dtsm} is a standard fact and won't be shown here, the interested reader can take a look at the proof of the analogous Theorem 5.2 for the KP-I equation in \cite{MST1}. 

Before giving a proof of Theorem \ref{dtsm} though, we will record the failure of a bilinear estimate for the (related to the homogeneous $\dot{H}^s$ spaces) $\dot{X}^{s,b}$ spaces that are defined as
\begin{displaymath}
\dot{X}^{s,b} = \left\{ u \in L^2_{t,x} | |(\xi, \mu)|^s \left\langle \tau - \frac{1}{4}\xi^3 + \frac{3}{4}\xi \mu^2 \right\rangle^b \tilde{u} (\tau, \xi, \mu) \in L^2_{\tau, \xi, \mu} \right\} \mbox{ with norm }
\end{displaymath}
\begin{displaymath}
\| u \|_{\dot{X}^{s,b}} = \left\| |(\xi, \mu)|^s \left\langle \tau - \frac{1}{4} \xi^3 + \frac{3}{4} \xi \mu^2 \right\rangle^b \tilde{u} (\tau, \xi, \mu) \right\|_{L^2_{\tau, \xi, \mu}}
\end{displaymath}
\begin{prop}\label{fbl}
 The inequality
 $$ \| NL_1 (u,v) \|_{\dot{X}^{s,-b'}} \lesssim \| u \|_{\dot{X}^{s,b}} \| v \|_{\dot{X}^{s,b}} $$
 fails for any $b,b' \in \rr$ and $s < -1$.
\end{prop}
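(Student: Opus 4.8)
The plan is to disprove the estimate directly, by exhibiting for each large $N$ test functions $u_N,v_N$ whose space–time Fourier transforms are indicators of small boxes and for which the quotient $\|NL_1(u_N,v_N)\|_{\dot{X}^{s,-b'}}\big/\big(\|u_N\|_{\dot{X}^{s,b}}\|v_N\|_{\dot{X}^{s,b}}\big)$ diverges as $N\to\infty$, for every fixed $b,b'$. The mechanism to exploit is that $NL_1$ carries the single outer derivative $\partial$, of symbol magnitude $\approx|(\xi,\mu)|$; hence in the homogeneous norm the output frequency is weighted by $|(\xi,\mu)|^{s}|(\xi,\mu)|=|(\xi,\mu)|^{1+s}$, which blows up as $|(\xi,\mu)|\to0$ precisely when $s<-1$. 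I therefore use a high--high--to--low interaction, combining frequencies near $(N,0)$ and $(-N,0)$ into a small output frequency. The decisive second idea is to place both boxes on the characteristic surface $\{w=0\}$ and to keep the resonance $R=w-w_1-w_2$ bounded; then all three modulation weights $\langle w_1\rangle,\langle w_2\rangle,\langle w\rangle$ are $\approx1$, which annihilates the exponents $b$ and $-b'$ and makes the construction work for all $b,b'$ simultaneously.

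Concretely, writing $\widetilde{NL_1(u,v)}=M\,(\tilde u*\tilde v)$ with $|M|\approx|(\xi,\mu)|$, I set $\tilde u_N=\chi_{B_1}$ and $\tilde v_N=\chi_{B_2}$, where $B_1$ is the box centred at $(P(N,0),N,0)$ and $B_2$ the box centred at $(P(-N+a,a),-N+a,a)$, each of spatial side $a$ and temporal length $\approx1$, with $a=N^{-2}$. Since $\nabla P(N,0)\approx(\tfrac34N^2,0)$, the oscillation of $w_1=\tau_1-P(\xi_1,\mu_1)$ across $B_1$ is $\approx1+N^2a=O(1)$, and likewise $w_2=O(1)$ across $B_2$; substituting the centres into the formula for $R$ gives $R\approx-\tfrac34N^2\xi\approx-\tfrac34N^2a=O(1)$ on the output box $B_1+B_2$, so that $w=w_1+w_2+R=O(1)$ throughout the support of $\tilde u_N*\tilde v_N$.

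The three norms are then elementary. Because $|(\xi_1,\mu_1)|\approx N$ and $\langle w_1\rangle\approx1$ on $B_1$, one has $\|u_N\|_{\dot{X}^{s,b}}^2\approx N^{2s}|B_1|$, and the same for $v_N$. On $B_1+B_2$ the convolution $\tilde u_N*\tilde v_N$ has height $\approx|B_1|$ over a set of measure $\approx|B_1|$, while $|(\xi,\mu)|^{1+s}\approx a^{1+s}$ and $\langle w\rangle^{-b'}\approx1$ there; hence $\|NL_1(u_N,v_N)\|_{\dot{X}^{s,-b'}}^2\approx a^{2(1+s)}|B_1|^3$. Dividing, the quotient of squared norms is $\approx a^{2(1+s)}|B_1|\,N^{-4s}=a^{2(s+2)}N^{-4s}$, and with $a=N^{-2}$ this equals $N^{-8(s+1)}$. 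The ratio of the norms themselves is therefore $\approx N^{-4(s+1)}$, which tends to $\infty$ exactly because $s<-1$, contradicting the estimate for any $b,b'$.

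The \textbf{main obstacle} is rigor rather than bookkeeping, at two points. First, the outer symbol $i\xi+\mu$ is genuinely complex and its argument varies over the origin-adjacent output box, so I must rule out cancellation in $\int M\,(\tilde u_N*\tilde v_N)$: I would recentre the output away from the axes, e.g. at $(a,a)$, and restrict to a sub-box on which $\arg(i\xi+\mu)$ stays in an arc shorter than $\pi/2$, giving $\left|\int M\,(\tilde u_N*\tilde v_N)\right|\gtrsim\int|M|\,|\tilde u_N*\tilde v_N|$; equivalently one inserts a fixed unimodular factor into $\tilde u_N$. Second, I must verify that every $\approx$ above is a genuine two-sided bound, uniform in $N$ and independent of $b,b'$, which is exactly what keeping $w_1,w_2,w=O(1)$ secures. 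Finally, to make the lower bound legitimate for all $s<-1$ at once (the integral of $|(\xi,\mu)|^{2(1+s)}$ near the origin converges only for $s>-2$), I would restrict the output to the annulus $|(\xi,\mu)|\approx a$, where $\tilde u_N*\tilde v_N\gtrsim|B_1|$ still holds; for $s\le-2$ the blow-up is only stronger.
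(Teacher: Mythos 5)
Your construction is essentially the paper's own counterexample: two frequency boxes near $(\pm N,0)$ of spatial side $\approx N^{-2}$ placed on the characteristic surface, interacting high--high--to--low so that the resonance $R\approx N^2\cdot N^{-2}=O(1)$ keeps all three modulation weights $\langle w\rangle,\langle w_1\rangle,\langle w_2\rangle\approx 1$ (neutralizing $b,b'$), with the output weight $|(\xi,\mu)|^{1+s}$ at frequency $\approx N^{-2}$ producing the same divergence $N^{-4(s+1)}$ as the paper's $N^{-4s-4}$. The only difference is presentational --- you bound the output norm directly while the paper runs the dual formulation with a third test function $\tilde h$ localized at the low output frequency, absorbing the unimodular symbol factors as phases into $g$ and $h$ exactly as you propose (note only the inner factor $\frac{i(\xi-\xi_1)+(\mu-\mu_1)}{i(\xi-\xi_1)-(\mu-\mu_1)}$ could cause cancellation, since the norm sees the outer symbol only in modulus) --- so your argument is correct and matches the paper's proof in substance.
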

\begin{rem}
 The same holds true for $NL_2 (u,v)$ which has as a result the failure of the control in $X^{s,b}$ of the nonlinear part. Note also that this result is natural since scaling considerations indicate that the critical Sobolev space for \eqref{1} is $\dot{H}^{-1} (\rr^2)$. To be a bit more precise a scaled solution for the equation is the following one:
 $$ u_{\lambda} (t,x,y) = \frac{1}{\lambda^2} u \left( \frac{t}{\lambda^3}, \frac{x}{\lambda}, \frac{y}{\lambda} \right) $$
 and then we notice that we have:
 $$ \phi_{\lambda} (x,y) = \frac{1}{\lambda^2} \phi \left( \frac{x}{\lambda}, \frac{y}{\lambda} \right) \Rightarrow \| \phi_{\lambda} \|_{\dot{H}^{-1} (\rr^2)} = \| \phi \|_{\dot{H}^{-1} (\rr^2)} $$ 
\end{rem}
\begin{rem}
It should be expected that a similar result is true for the modified Novikov-Veselov equation \eqref{mnv}. In this situation we should have that the data-to-solution map fails to be $C^3$ at the origin with respect to the topology of $\dot{H}^s (\rr^2)$ for any $s < 0$, as we can see according to scaling considerations. The solution of \eqref{mnv} remains invariant under the following transformation:
$$ u_{\lambda} (t,x,y) = \frac{1}{\lambda} u \left( \frac{t}{\lambda^3}, \frac{x}{\lambda}, \frac{y}{\lambda} \right) $$
and in this case we have:
 $$ \phi_{\lambda} (x,y) = \frac{1}{\lambda} \phi \left( \frac{x}{\lambda}, \frac{y}{\lambda} \right) \Rightarrow \| \phi_{\lambda} \|_{L^2 (\rr^2)} = \| \phi \|_{L^2 (\rr^2)} $$ 
 \end{rem}
Proposition \ref{fbl} won't imply directly Theorem \ref{dtsm}, but the counterexample that will cause the failure of the bilinear $\dot{X}^{s,b}$ estimate will be used to cause the failure of the differentiability at the origin of the data-to-solution map.
\begin{proof}[Proof of Proposition \ref{fbl}]
 We employ again the dual formulation of the $\dot{X}^{s,b}$ estimate. We rewrite it here for the convenience of the reader:
 $$ \int_{\mathbb{R}^6} K_0(\tau,\tau_1,\xi,\xi_1,\mu.\mu_1) \tilde{f} (\tau_1,\xi_1,\mu_1) \tilde{g} (\tau-\tau_1, \xi-\xi_1, \mu-\mu_1) \tilde{h} (\tau,\xi,\mu) d\tau d\tau_1 d\xi d\xi_1 d\mu d\mu_1 $$ $$ \lesssim \| f \|_{L^2_{\tau,\xi,\mu}} \| g \|_{L^2_{\tau,\xi,\mu}} \| h \|_{L^2_{\tau,\xi,\mu}} $$ where the function $K_0$ is defined as $$ K_0(\tau,\tau_1,\xi,\xi_1,\mu.\mu_1) = \dfrac{|(\xi,\mu)||(\xi,\mu)|^s}{\langle w \rangle^{b'} |(\xi_1,\mu_1)|^s \langle w_1 \rangle^{b} |(\xi-\xi_1, \mu-\mu_1 )|^s \langle w_2 \rangle^{b} } $$ 
   
\begin{rem}
We have to note that the definition of $K_0$ is not strictly correct since we would like to bound the quantity defined above by below. The fraction that shows up because of the nonlinearity is the following: 
$$ \dfrac{(i\xi+\mu)(i(\xi-\xi_1) + (\mu-\mu_1))}{i(\xi-\xi_1) - (\mu-\mu_1)} $$
We can make the following changes now:
$$ i\xi + \mu = |i\xi + \mu| e^{iH(\xi,\mu)} $$
for some function $H$. Moreover by noticing that 
$$ \left| \dfrac{i(\xi-\xi_1) + (\mu-\mu_1)}{i(\xi-\xi_1) - (\mu-\mu_1)} \right| = 1 $$
we can write in the end:
$$ \dfrac{(i\xi+\mu)(i(\xi-\xi_1) + (\mu-\mu_1))}{i(\xi-\xi_1) - (\mu-\mu_1)}  = |i\xi + \mu| e^{iH(\xi,\mu)} e^{i\tilde{H}(\xi-\xi_1, \mu-\mu_1)}$$
for some other function $\tilde{H}$. These two new terms are just phase changes that can be absorbed in the definitions of the functions $h$ and $g$ respectively. The final outcome is not affected since they don't influence in any significant way the $L^2$ norms of these functions. So finally, our definition of $K_0$ is good enough for our purposes. 
\end{rem}
The functions $f,g,h$ are given as before as follows (note that the absolute values are there as before, but in this case too we can argue as in the remark above):
$$ \tilde{f} (\tau_1,\xi_1,\mu_1) = |\tilde{u} (\tau_1,\xi_1,\mu_1) \langle w_1 \rangle^{b} |(\xi_1,\mu_1)|^s |, $$ $$ \tilde{g} (\tau-\tau_1, \xi-\xi_1, \mu-\mu_1) = |\tilde{v}(\tau-\tau_1, \xi-\xi_1, \mu-\mu_1) \langle w_2 \rangle^{b} |(\xi-\xi_1, \mu-\mu_1)|^s $$ $$ \tilde{h} (\tau, \xi, \mu) = |\tilde{h'} (\tau,\xi,\mu) \langle w \rangle^{b'} |(\xi,\mu)|^{-s}| $$

We define now the functions $f,g,h$ that will show the failure of the bilinear estimate.
\begin{defn}
The function $f,g,h$ that will be used from now are defined as follows
$$ \widetilde{f}(\tau_1, \xi_1, \mu_1) = \begin{cases} 1 \mbox{ for $\xi_1 \in [-N-c, N], \mu_1 \in [-2c, -c], w_1 \in [0,1]$} \\
0 \mbox{ otherwise}
\end{cases} $$ 

$$ \widetilde{g}(\tau', \xi', \mu' ) = \begin{cases} 1 \mbox{ for $\xi' \in [N+2c, N+3c], \mu' \in [3c, 4c], |w_2| \mik 2 + Cc N^2$} \\
0 \mbox{ otherwise}
\end{cases}$$ 
where $\tau' = \tau-\tau_1, \xi' = \xi-\xi_1, \mu' = \mu-\mu_1$

$$ \widetilde{h}(\tau, \xi,\mu) = \begin{cases} 1 \mbox{ for $\xi \in [c, 3c]. \mu \in [c, 3c], w \in [0,1]$} \\
0 \mbox{ otherwise}        
\end{cases}
$$ 
for some $0 < c \approx \frac{1}{N^2} << 1$, i.e. $N >> 1$, and any $\ee > 0$.  
\end{defn}
Note that with this choice of functions, the ``resonant'' function satisfies the following bound in their supports:
$$ |R(\xi_1, \xi-\xi_1, \mu_1, \mu-\mu_1)| \approx cN^2 \approx const. $$
by the choice of $c$. We can now compute for $(\tau, \xi, \mu) \in supp(\widetilde{h})$: $$ \left( \dfrac{\widetilde{f}}{\langle w \rangle^b} \ast \dfrac{\widetilde{g}}{\langle w \rangle^b} \right) (\tau, \xi, \mu) = \int_{supp(\widetilde{f})} \dfrac{1}{\langle w_1 \rangle^b \langle w_2 \rangle^b} d\tau_1 d\xi_1 d\mu_1 = $$ $$ = \int_{-N-c}^{-N} \int_{-2c}^{-c} \int_{w_1 \in [0,1]} \dfrac{d\tau_1 d\xi_1 d\mu_1}{\langle w_1 \rangle^b \langle w - w_1 + R\rangle^b} \gtrsim $$ $$ \gtrsim\int_{-N-c}^{-N} \int_{-2c}^{-c} \int_{w_1 \in [0,1]} \dfrac{d\tau_1 d\xi_1 d\mu_1}{\langle w_1 \rangle^b \langle 1 + R\rangle^b} \gtrsim $$ $$ \gtrsim \int_{-N-c}^{-N} \int_{-2c}^{-c}  \dfrac{d\xi_1 d\mu_1}{\langle 1 + R\rangle^b} \gtrsim c^2 $$ 
where the first from the inequalities holds as $|w - w_1| \lesssim 1$, the second one by Fubini and the last one by noticing that since we work in the support of $f$ and $g$, we have that $$ R \approx cN^2 \approx 1 \Rightarrow \dfrac{1}{1 + |R|^b} \approx \dfrac{1}{1 + |cN^2 |^b } \approx const. $$ 

On the other hand we have the following:
$$K_0(\tau,\tau_1,\xi,\xi_1,\mu.\mu_1) \gtrsim \dfrac{N^{-4s}}{N^2 \langle w \rangle^{b'} \langle w_1 \rangle^b \langle w_2 \rangle^b} = $$ $$ = \dfrac{N^{-4s-2}}{\langle w \rangle^{b'} \langle w_1 \rangle^b \langle w_2 \rangle^b} = $$
 So finally using all previous estimates, we note now that the integral of interest is bounded below by: 
 $$ c^2 N^{4s-2} \int_{c}^{3c} \int_{c}^{3c} \int_{w\in[0,1]} \dfrac{1}{ \langle w \rangle^{b'}} d\tau d\xi d\mu \gtrsim c^4 N^{-4s-2} $$ 

By the definitions of $f,g,h$ we can compute directly their $L^2$ norms: 
$$ \| f \|_{L^2} = c, \quad \| g \|_{L^2} = c (2 + Cc N^2 )^{1/2}, \quad \| h \|_{L^2} = c $$ 
So if the inequality that was initially stated was true, we would have: 
$$ c^4 N^{-4s-2}\lesssim c^3 (2 + Cc N^2 )^{1/2}$$ 
and since we assume that $cN^2 \approx 1$ we would actually have that: 
$$ N^{-4s-4} \lesssim 1 $$ 
which gives us a contradiction assuming that
$$ -4s - 4 > 0 \Leftrightarrow s < -1 $$
since $N >> 1$. 
\end{proof}
We are now ready to give the proof of Theorem \ref{dtsm}.
\begin{proof}[Proof of Theorem \ref{dtsm}]
 We consider a parameter $\ee > 0$ and the equation
 \begin{equation}\label{4}
 \left\{\begin{aligned}
        \partial_t u + \partial^3 u +\bar{\partial}^3 u + NL_1 (u) + NL_2 (u) = 0 \\
        u(0,x) = \ee \phi(x)\
        \end{aligned}
 \right.
\end{equation}
For \eqref{4} a standard computation shows that
$$ \left.\dfrac{\partial^2 u}{\partial \ee}\right|_{\ee = 0} (t,x,y) := u_2 (t,x,y) = \int_0^t e^{i(t-s)NV} NL(e^{isNV} \phi , e^{isNV} \phi) ds $$
which is the second derivative of the data-to-solution map $NV$ for \eqref{2} evaluated at 0, where $NL = NL_1 + NL_2$. Note that we'll show the computations only for $NL_1$, the case of $NL_2$ is similar. 

If $NV$ was $C^2$ at the origin, then we would have the following inequality for $u_2$:
\begin{equation}\label{5}
\| u_2 (t,x,y) \|_{\dot{H}^s (\mathbb{R}^2 )} \lesssim \| \phi \|^2_{\dot{H}^s (\mathbb{R}^2 )}
\end{equation}
Hence our goal in order to complete the proof of Theorem \ref{ipnv} is to show the failure of \eqref{5}. We proceed by making a choice for $\phi$ based on the computations in the proof of Proposition \ref{fbl}. Specifically we choose a function such that it consists of a part that is supported  $supp(\widetilde{f})$ and another part that is supported in $supp(\widetilde{h})$ (as they were given in the proof of Proposition \ref{fbl}). 
\begin{defn}
 From now on, the function $\phi$ will have the following form (we define it through its Fourier transform):
 $$ \hat{\phi} (\xi ,\mu) = c^{-1} N^{-s} \chi_1 (\xi,\mu) + c^{-1} N^{-s} \chi_2 (\xi , \mu) $$ $$ \mbox{ where $\chi_1$ is the indicator function of $D_1 = [-N-c , -N] \times [-2c, -c]$ } $$ $$ \mbox{ and $\chi_2$ of $D_2 = [N+2c, N+3c] \times [3c, 4c] $ } $$
\end{defn}
As before we choose
$$ cN^2 \approx 1 $$

In order to use this definition appropriately we present some formulas first. The analogue of Lemma 4, page 376 of \cite{MST1} reads as follows for the Novikov-Veselov equation (its proof is the same): $$ \int_0^t e^{i(t-s)PV} F(s, x, y) ds = $$ $$ = const. \int_{\mathbb{R}^3} e^{it(\xi^3 - \xi^2 \mu) + ix\xi + iy\mu} \dfrac{e^{it(\tau - \xi^3 + \xi^2 \mu)} - 1}{\tau - \frac{1}{4}\xi^3 + \frac{3}{4}\xi^2 \mu} \tilde{F} (\tau,\xi,\mu) d\tau d\xi d\mu $$

After some computations we arrive at the following formula:
$$ u_2 (t,x,y) = const. \int_{\mathbb{R}^4} (i\xi + \mu) e^{it(\xi^3 - \xi^2 \mu) + ix\xi + iy\mu} \dfrac{e^{itR (\xi_1, \xi-\xi_1, \mu_1, \mu-\mu_1)} - 1}{R (\xi_1, \xi-\xi_1, \mu_1, \mu-\mu_1)} \times $$ $$ \quad \quad \quad \quad \quad \times \dfrac{i(\xi - \xi_1 ) + (\mu -\mu_1 )}{i(\xi - \xi_1 ) - (\mu - \mu_1 )} \hat{\phi} (\xi_1 , \mu_1 ) \hat{\phi} (\xi - \xi_1 , \mu - \mu_1 ) d\xi d\mu d\xi_1 d\mu_1 $$

By the definition of $\phi$,  the formula for $u_2$ can be broken into three parts, according to where $(\xi_1,\mu_1)$ and $(\xi-\xi_1, \mu-\mu_1)$ belong to. In two of these parts $(\xi_1 ,\mu_1)$ and $(\xi-\xi_1, \mu-\mu_1)$ belong to the same set and in the third one they belong to different sets. Denoting the function inside the integral for $u_2$ by $\Phi$ we have that $$ u_2 (t,x,y) = I(t,x,y) + II(t,x,y) + III(t,x,y) \mbox{ where } $$ $$  I(t,x,y) = \frac{const.}{c^2 N^{2s}} \int_{(\xi_1,\mu_1) \in D_1, (\xi-\xi_1, \mu-\mu_1) \in D_1} \Phi d\xi d\mu d\xi_1 d\mu_1 $$ $$ II(t,x,y) =   \frac{const.}{c^2 N^{2s}} \int_{(\xi_1,\mu_1) \in D_2, (\xi-\xi_1, \mu-\mu_1) \in D_2} \Phi d\xi d\mu d\xi_1 d\mu_1  $$ $$III(t,x,y) =  \frac{const.}{c^2 N^{2s} } \int_{(\xi_1,\mu_1) \in D_1, (\xi-\xi_1, \mu-\mu_1) \in D_2} \Phi d\xi d\mu d\xi_1 d\mu_1 + $$ $$ + \frac{const.}{cdN^s } \int_{(\xi_1,\mu_1) \in D_2, (\xi-\xi_1, \mu-\mu_1) \in D_1} \Phi d\xi d\mu d\xi_1 d\mu_1  $$
First we give upper bounds for $I$. We take absolute values inside and we get rid of the exponential, also we note that for $(\xi_1 ,\mu_1), (\xi-\xi_1, \mu-\mu_1) \in D_1$ we have that:
$$ |\xi| \approx |\xi-\xi_1| \approx |\xi_1| \approx N, \quad |\mu| \approx |\mu-\mu_1| \approx |\mu_1| \approx c $$
which implies according to our choice of $c$ that
$$ |R (\xi_1, \xi-\xi_1, \mu_1, \mu-\mu_1)| \approx N^3 $$
Now we can compute the following:
$$ \| I \|_{\dot{H}^s (\rr^2)} \lesssim \dfrac{N^{s+1} c}{N^{2s} N^3} = N^{-s-4} $$
Similarly we can get the following bound for $II$:
$$ \| II \|_{\dot{H}^s (\rr^2)} \lesssim \dfrac{N^{s+1} c}{N^{2s} N^3} = N^{-s-4} $$
We now turn to $III$ and for convenience we break it into two parts, $IV$ and $V$. Taking the Fourier transform of $IV$ as $(x,y) \rightarrow (\xi,\mu)$ we note that $$ \mathcal{F} IV (t,\xi,\mu) =  \dfrac{const. (i\xi + \mu) e^{it(\xi^3 - \xi^2 \mu)}}{cdN^s} \int_{(\xi_1,\mu_1) \in D_1, (\xi-\xi_1, \mu-\mu_1) \in D_2} \times $$ $$ \times \dfrac{e^{itR (\xi_1, \xi-\xi_1, \mu_1, \mu-\mu_1)} - 1}{R (\xi_1, \xi-\xi_1, \mu_1, \mu-\mu_1)} \dfrac{i(\xi - \xi_1 ) + (\mu -\mu_1 )}{i(\xi - \xi_1 ) - (\mu - \mu_1 )} d\xi_1 d\mu_1 $$ 

For $R$ we use the computations of the previous section to show that for $(\xi_1,\mu_1) \in D_1$ and  $(\xi-\xi_1, \mu-\mu_1) \in D_2$ we have the estimate: 
$$|R (\xi_1, \xi-\xi_1, \mu_1, \mu-\mu_1)| \approx cN^2 \approx 1$$ 
Using our definition of $c$ we can see that
$$ \left| \dfrac{e^{itR} - 1}{R} \right| \gtrsim 1 $$ 

Note that we have to deal also with the other fraction inside $\mathcal{F} IV$, but this is not really a problem, since once we take the modulus of it (as we will in order to compute the $\dot{H}^s$ norm of $IV$) we note that this fraction has modulus 1, so it can be seen as a phase change which won't affect in any significant way the computation of the measure of the set that we are interested in (for more on this, see the related remark in the proof of Proposition \ref{fbl}).

So finally we have: 
$$ \| IV \|_{\dot{H}^s (\mathbb{R}^2)} \gtrsim \dfrac{ccc^s}{N^{2s}} \approx N^{-4s-4} $$ 
The same estimate holds for $V$, so in the end we can state that:
$$ \| III \|_{\dot{H}^s (\mathbb{R}^2)} \gtrsim \dfrac{ccc^s}{N^{2s}} \approx N^{-4s-4} $$
If \eqref{5} was true, then as $\| \phi \|_{\dot{H}^s (\rr^2)} \approx 1$, we would have that:
$$ 1 \approx \| \phi \|_{\dot{H}^s (\rr^2)} \gtrsim  \| u_2 (t,x,y) \|_{\dot{H}^s (\mathbb{R}^2 )} \gtrsim $$ $$ \gtrsim \| III \|_{\dot{H}^s (\mathbb{R}^2)} - \| II \|_{\dot{H}^s (\mathbb{R}^2)} - \| I \|_{\dot{H}^s (\mathbb{R}^2)} \gtrsim N^{-4s-4} - N^{-s-4} $$ $$ \Rightarrow N^{-4s-4} \lesssim 1 + N^{-s-4} $$
We use now that $s < -1$. We consider two cases.

\textbf{(i) $-4 \mik s < -1$: } In this case we have that $-s-4 \mik 0$. This implies that $N^{-4s-4} \lesssim 1$ which is a contradiction since $N >> 1$ and $-4s-4 > 0$.

\textbf{(ii) $s < -4$: } In this case $-s-4 > 0$ which implies that 
$$ N^{-4s-4} \lesssim N^{-s-4} \Rightarrow N^{-3s} \lesssim 1 $$
a contradiction again because $N >> 1$ and $-3s > 0$.

\end{proof}

\end{document}